\documentclass{amsart}
\usepackage{tikz}
\usepackage{xcolor}
\usepackage{amssymb,latexsym,amsmath,extarrows}
\usepackage{graphicx,mathrsfs,comment}
\usepackage{hyperref,url}

\usepackage{amstext}
\usepackage{bbm}

\numberwithin{equation}{section}

\makeatletter
\newcommand\@avprod[2]{%
  {\sbox0{$\m@th#1\prod$}%
   \vphantom{\usebox0}%
   \ooalign{%
     \hidewidth
     \smash{\vrule height\dimexpr\ht0+1pt\relax depth\dimexpr\dp0+1pt\relax}%
     \hidewidth\cr
     $\m@th#1\prod$\cr
   }%
  }%
}
\newcommand{\avprod}{\mathop{\mathpalette\@avprod\relax}\displaylimits}

\newtheorem{theorem}{Theorem}[section]
\newtheorem{lemma}[theorem]{Lemma}

\newtheorem{proposition}[theorem]{Proposition}

\newtheorem{corollary}[theorem]{Corollary}

\newtheorem{conjecture}[theorem]{Conjecture}

\newcommand{\e}{\epsilon}



\newcommand{\Tau}{\mathcal{T}}

\renewcommand{\e}{\epsilon}
\renewcommand{\b}{\beta}
\renewcommand{\a}{\alpha}

\newcommand{\R}{\mathbb{R}}

\newcommand{\T}{\mathbb{T}  }

\newcommand{\N}{\mathcal{N}}

\newcommand{\C}{\mathbb{C}}

\newcommand{\bP}{\mathbb{P}}
\newcommand{\cP}{\mathcal{P}}
\newcommand{\Z}{\mathbb{Z}}

\begin{document}

\title[]{Small cap decoupling for the paraboloid in $\mathbb{R}^n$}

\author{Larry Guth} \address{Larry Guth\\  Deparment of Mathematics, Massachusetts Institute of Technology, USA}\email{lguth@math.mit.edu}

\author{Dominique Maldague}\address{ Dominique Maldague\\ Deparment of Mathematics, Massachusetts Institute of Technology, USA} \email{dmal@mit.edu}

\author{Changkeun Oh} \address{ Changkeun Oh\\  Deparment of Mathematics, Massachusetts Institute of Technology, USA \\ 
Department of Mathematical Sciences and RIM, Seoul National University, Republic of Korea }\email{changkeun.math@gmail.com}

\maketitle

\begin{abstract}
We extend the small cap decoupling program established by Demeter, Guth, and Wang to paraboloids in $\R^n$, for some range of $p$. 
\end{abstract}


\section{Introduction}

Let $\bP^{n-1}$ be a truncated paraboloid in $\R^n$;
\begin{equation}
    \bP^{n-1}:=\Big\{(\xi_1,\ldots,\xi_{n-1},\sum_{i=1}^{n-1}\xi_{i}^2): |\xi_i| \leq 1, \; i=1,\ldots,n-1 \Big\}.
\end{equation}
Denote by $\N_{\bP^{n-1}}(R^{-1})$ the $R^{-1}$-neighborhood of the paraboloid $\bP^{n-1}$. Given a parameter $\vec{\alpha}=(\alpha_1,\ldots,\alpha_{n-1}) \in [\frac12,1]^{n-1}$, let $\Gamma_{\vec{\alpha} }(R^{-1})$ be the collection defined by
\begin{equation}\label{0430.12}
\begin{split}
    \Big\{ (B \times \R) &\cap \N_{\bP^{n-1}}(R^{-1}):  
    \\& B= \big( (c_1,\ldots,c_{n-1})+ [0,R^{-\alpha_1}] \times \cdots \times [0,R^{-\alpha_{n-1}}] \big), \, c_i \in R^{-\alpha_i}\Z \Big\}.
\end{split}
\end{equation}
For the parameter $\vec{\alpha}$, we use the notation $|\vec{\alpha}|:=\sum_{i=1}^{n-1}\alpha_i$. By definition, $|\vec{\alpha}| \geq \frac{n-1}{2}$. Note that the cardinality of $\Gamma_{\vec{\alpha}}(R^{-1}) $ is  $R^{|\vec{\alpha}|}$.
For each positive measure set $B \subset \R^n$ and each $F:\R^n \rightarrow \C$ we define
\begin{equation}
    \cP_{B}F(x):= \int_B \widehat{F}(\xi)e(\xi \cdot x)\, d\xi,
\end{equation}
where $e(\cdot)=e^{2\pi i \cdot}$. Our main theorem is as follows.

\begin{theorem}\label{0428.thm11}
Let $n \geq 2$ and $\frac{n-1}{2} \leq |\vec{\alpha}| \leq \frac{n}{2}$.
Then for  $2 \leq p \leq 2+\frac{2}{|\vec{\alpha}| }$ and $\epsilon>0$,
\begin{equation}\label{0526.14}
    \|F\|_{L^p(\R^n)} \leq C_{p,\epsilon} R^{|\vec{\alpha}|(\frac12-\frac1p)+\epsilon} \Big( \sum_{\gamma \in \Gamma_{\vec{\alpha}}(R^{-1}) }\| \mathcal{P}_{\gamma}F \|_{L^p(\R^n)}^p \Big)^{\frac1p}
\end{equation}
for all functions $F: \R^n \rightarrow \C$ whose Fourier transform is supported on $\N_{\bP^{n-1}}(R^{-1})$.
\end{theorem}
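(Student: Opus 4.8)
The plan is to interpolate between two extremes. At one end sits the sharp $\ell^2$ decoupling theorem of Bourgain--Demeter for the paraboloid, applied at the scale $R^{-1/2}$ (the natural caps for $\ell^2$ decoupling), which gives, for $2\le p\le \tfrac{2(n+1)}{n-1}$,
\begin{equation*}
  \|F\|_{L^p(\R^n)}\lesssim_\epsilon R^{\epsilon}\Bigl(\sum_{\theta}\|\cP_\theta F\|_{L^p}^2\Bigr)^{1/2},
\end{equation*}
where $\theta$ ranges over the $R^{-1/2}$-caps. At the other end sits the trivial ``flat'' estimate: for any single small cap $\gamma\in\Gamma_{\vec\alpha}(R^{-1})$ one has $\|\cP_\gamma F\|_{L^p}\le |\gamma|^{?}\cdots$ — more precisely, since $\gamma$ is contained in a box of dimensions $R^{-\alpha_1}\times\cdots\times R^{-\alpha_{n-1}}\times R^{-1}$, Bernstein/Hausdorff--Young gives $\ell^\infty\to\ell^p$ type control, and summing trivially over the $R^{|\vec\alpha|}$ caps yields an $\ell^p$-to-$\ell^p$ inequality with a power of $R$ that is $R^{|\vec\alpha|(1-1/p)}$-ish at $p=2$ and improves as $p\to\infty$. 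The exponent $|\vec\alpha|(\tfrac12-\tfrac1p)$ in \eqref{0526.14} is exactly what one gets by an $\ell^2$-type count ($R^{|\vec\alpha|}$ caps, square-root cancellation) combined with the $\ell^2\to\ell^p$ loss; the task is to prove the $\ell^p$-based version with the $\ell^2$-flavored exponent, on the stated range $p\le 2+\tfrac{2}{|\vec\alpha|}$.

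The key steps I would carry out, in order: \textbf{(1)} Pass from the $R^{-1}$-neighborhood to a two-scale decomposition. Insert the intermediate scale $R^{-1/2}$ and the associated $R^{-1/2}$-caps $\theta$, and observe each small cap $\gamma$ lives inside exactly one such $\theta$ after a further flattening (the portion of $\bP^{n-1}$ over a $\theta$ is, up to affine rescaling, again a truncated paraboloid in $\R^n$ at scale $R^{-1}$ relative to its own center). \textbf{(2)} Apply Bourgain--Demeter $\ell^2$ decoupling to go from $F$ to the family $\{\cP_\theta F\}$ with an $R^\epsilon$ loss. \textbf{(3)} For each $\theta$, rescale: the standard parabolic rescaling (affine map fixing $\bP^{n-1}$) sends the cap $\theta$ to the full paraboloid and sends the small caps $\gamma\subset\theta$ to small caps of a new parameter vector. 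One must check that the anisotropic boxes $\prod_i[0,R^{-\alpha_i}]$ behave well under this rescaling — this is where the product structure of $\Gamma_{\vec\alpha}$ and the constraint $\alpha_i\ge\tfrac12$ enters, and it is essentially the reason the theorem is stated for axis-parallel boxes. After rescaling, the small-cap estimate at scale $R^{1/2}$ for the new (smaller) parameter becomes available by induction on $R$. \textbf{(4)} Combine the per-$\theta$ estimates: use $\ell^2\hookrightarrow\ell^p$ backwards is not allowed, so instead one needs to convert the $\ell^2$ sum over $\theta$ into an $\ell^p$ sum over $\gamma$, paying the price of the number of $\theta$'s, namely $R^{(n-1)/2}$ of them — but this only works cleanly if $|\vec\alpha|$ is close to $\tfrac{n-1}{2}$; in general one must be more careful and iterate the two-scale reduction $\sim\log\log R$ times, at each stage shaving the parameter and re-accumulating the $R^\epsilon$ losses into a single $R^{O(\epsilon)}$.

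\textbf{The main obstacle} I anticipate is Step (4), the bookkeeping of exponents through the iteration, and in particular verifying that the arithmetic of $|\vec\alpha|(\tfrac12-\tfrac1p)$ is reproduced exactly — not merely up to a constant multiple in the exponent — under the rescaling/induction. The constraint $p\le 2+\tfrac{2}{|\vec\alpha|}$ should emerge as precisely the threshold at which the trivial/flat bound for a single small cap no longer dominates the rescaled inductive bound, i.e. the point where the two competing estimates balance; pinning down that this is the sharp crossover (and that below it the induction closes with no net loss beyond $R^\epsilon$) is the technical heart. A secondary difficulty is the geometry in Step (3): parabolic rescaling is naturally adapted to \emph{isotropic} caps, so for a genuinely anisotropic $\vec\alpha$ one cannot rescale all directions simultaneously, and I expect one must instead use a \emph{cylindrical} decoupling / lower-dimensional induction on the number of ``active'' coordinates (those with $\alpha_i>\tfrac12$), combining the Bourgain--Demeter estimate in the active directions with a trivial treatment of the flat directions — so the real proof is likely a double induction, on $R$ and on $\#\{i:\alpha_i>\tfrac12\}$.
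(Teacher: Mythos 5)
There is a genuine gap, and it sits exactly where you flagged uncertainty but then waved past. The core problem is that the two-scale scheme in your Steps (2)--(3) does not produce a smaller instance of the same theorem. A canonical cap $\theta\in\Theta(R^{-1/2})$ has tangential side $R^{-1/2}$ and thickness $R^{-1}$; parabolic rescaling sends it to the \emph{thickness-one} neighborhood of the paraboloid, not to $\mathcal N_{\bP^{n-1}}(R^{-1/2})$. After rescaling, the paraboloid inside $\theta$ is flat to within $O(1)$, so the further decomposition of $\cP_\theta F$ into the small caps $\gamma\subset\theta$ is a \emph{flat} decoupling problem with no curvature left. If you run Bourgain--Demeter at scale $R^{-1/2}$, then pass from $\ell^2(\theta)$ to $\ell^p(\theta)$ by H\"older (cost $R^{\frac{n-1}{2}(\frac12-\frac1p)}$), and then apply flat $\ell^p$ decoupling of each $\theta$ into its $R^{|\vec\alpha|-\frac{n-1}{2}}$ small caps (cost $R^{(|\vec\alpha|-\frac{n-1}{2})(1-\frac2p)}$), the total exponent is $(2|\vec\alpha|-\frac{n-1}{2})(\frac12-\frac1p)$, which exceeds the target $|\vec\alpha|(\frac12-\frac1p)$ by $(|\vec\alpha|-\frac{n-1}{2})(\frac12-\frac1p)>0$ whenever $p>2$ and $|\vec\alpha|>\frac{n-1}{2}$. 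Iterating does not help: the defect is multiplicative in the exponent, and there is no intermediate parabolic scale to iterate over once $\theta$ has been flattened. A confirming red flag is that the constraint $p\le 2+\frac{2}{|\vec\alpha|}$ never appears in your argument even though the theorem is sharp at that exponent; an argument proving the claimed bound for all $p\le\frac{2(n+1)}{n-1}$ would contradict the examples behind Corollary~\ref{cor1}.

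The paper closes this gap with machinery that is absent from your outline. After a broad/narrow reduction (Section~\ref{broad/narrow}), the genuine content is a \emph{multilinear} small cap estimate (Theorem~\ref{0428.thm21}), proved not by Bourgain--Demeter plus flat decoupling but by the \emph{refined} versions of both: a refined flat decoupling that tracks the number $N_i$ of wave packets per slab (Lemma~\ref{0429.lem31}, Corollary~\ref{0429.cor57}), and a refined $\ell^2$ decoupling that tracks the tube-incidence count $r_i$ per $R^{1/2}$-cube (Lemma~\ref{0429.ref}), interpolated with multilinear restriction at $p=\frac{2n}{n-1}$ (Theorem~\ref{0429.thm34}). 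The saving over the naive scheme is precisely the content of the incidence estimate (Proposition~\ref{0429.prop35}), proved via a high/low frequency analysis of the square function $\sum_T v_T$ using the anisotropic dyadic shells $\eqref{11.15.328}$. The constraint $p\le 2+\frac{2}{|\vec\alpha|}$ materialises there, when the $L^2$ high-frequency estimate and the $L^1$ low-frequency estimate are balanced and one needs $\frac{n-2|\vec\alpha|}{2|\vec\alpha|-n+1}\ge 0$. Your prediction that the threshold ``emerges where two competing estimates balance'' is morally right, but the estimates that are balanced are these Kakeya-type incidence bounds, not the trivial flat bound versus an inductive bound; without the refined decoupling inputs and the high/low argument there is no mechanism in your sketch to recover the missing $R^{(|\vec\alpha|-\frac{n-1}{2})(\frac12-\frac1p)}$.
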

For a given $\vec{\a}$, the range of exponents $p$ in Theorem \ref{0428.thm11} is sharp. It is conjectured that Theorem \ref{0428.thm11} holds in the larger range of $|\vec{\a}|\le n-1$. In the case that $n=2$, Theorem \ref{0428.thm11} was proved in \cite{MR4153908}.

One of the original motivations of Demeter, Guth, and Wang for studying small cap decoupling was to prove exponential sum estimates. 
The following is the $n$-dimensional analogue of Corollary 3.2 in \cite{MR4153908}. 

\begin{corollary}\label{cor1}
Let $\epsilon>0$, $n\ge 2$, $\frac{n-1}{2}\le |\vec{\alpha}|\le \frac{n}{2}$ and let $2\le p\le 2+\frac{2}{|\vec{\alpha}|}$. The inequality 
\begin{equation*}
    \begin{split}
        \Big( \frac{1}{R^n} \int_{B_R} &\big|\sum_{\substack{B\in\Gamma_{\vec{\a}}(R^{-1})}} a_{B}e\big(x\cdot\xi_B)|^p  \,dx  \Big)^{\frac1p} \leq C_{p,\epsilon} R^{\frac{|\vec{\alpha}|}{2}+\epsilon}
    \end{split}
\end{equation*}
holds true for any collection of vectors $\cup_{B\in\Gamma_{\vec{\a}}(R^{-1})}\{\xi_B\}$ satisfying $\xi_B\in B$, any ball $B_R\subset \R^n$ of diameter $R$, and each $a_B\in\C$ with magnitude $\sim 1$. 
\end{corollary}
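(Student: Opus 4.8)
The plan is to deduce Corollary~\ref{cor1} from Theorem~\ref{0428.thm11} by testing the decoupling inequality against a smooth extension of the exponential sum, which is the standard route from small cap decoupling to exponential sum bounds.

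First I would fix $B_R=B(x_0,R)$ and the coefficients $a_B$, and choose a Schwartz function $\eta$ whose Fourier transform is supported in $B(0,c_0)$, for a small dimensional constant $c_0$, and with $\eta\ge 1$ on $B(0,1)$. Putting $\eta_R(x):=\eta((x-x_0)/R)$ we have $\eta_R\ge 1$ on $B_R$, $\|\eta_R\|_{L^p(\R^n)}\sim R^{n/p}$, and $\widehat{\eta_R}$ supported in $B(0,c_0R^{-1})$. Then I would set
\[
  F(x):=\Big(\sum_{B\in\Gamma_{\vec{\alpha}}(R^{-1})}a_B\,e(x\cdot\xi_B)\Big)\eta_R(x),\qquad \widehat F(\xi)=\sum_B a_B\,\widehat{\eta_R}(\xi-\xi_B).
\]
Because each $\xi_B$ lies in its cap $B\subset\N_{\bP^{n-1}}(R^{-1})$ and convolution with $\widehat{\eta_R}$ moves frequencies by at most $c_0R^{-1}$, the function $\widehat F$ is supported in the $C_nR^{-1}$-neighbourhood of $\bP^{n-1}$ for some dimensional $C_n$ (any mismatch at the truncation boundary $|\xi_i|=1$ being harmless). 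Applying Theorem~\ref{0428.thm11} with $R$ replaced by $R/C_n$---which changes the relevant powers of $R$, the cap side lengths, and the cap count $R^{|\vec{\alpha}|}$ only by dimensional constants---then gives a usable instance of \eqref{0526.14} for $F$.

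On the left side I would use $\eta_R\ge 1$ on $B_R$ to get $\big\|\sum_B a_Be(x\cdot\xi_B)\big\|_{L^p(B_R)}\le\|F\|_{L^p(\R^n)}$, and on the right I would estimate $\|\mathcal P_\gamma F\|_{L^p(\R^n)}$ for each decoupling cap $\gamma$. We have $\mathcal P_\gamma F=\sum_B a_B e(\cdot\,\xi_B)\,\mathcal P^0_{\gamma-\xi_B}\eta_R$, where $\mathcal P^0_S$ denotes Fourier restriction to $S$ and the sum runs over those $B$ with $(\xi_B+B(0,c_0R^{-1}))\cap\gamma\neq\emptyset$; since $c_0R^{-1}$ does not exceed the side lengths of a cap, there are only $O_n(1)$ such $B$. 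For each of them $\|\mathcal P^0_{\gamma-\xi_B}\eta_R\|_{L^p(\R^n)}\lesssim\|\eta_R\|_{L^p(\R^n)}$, because on the small ball $B(0,c_0R^{-1})$ carrying $\widehat{\eta_R}$ the translated cap $\gamma-\xi_B$ is comparable to an intersection of boundedly many half-spaces, on which Fourier restriction is $L^p$-bounded uniformly. Hence $\|\mathcal P_\gamma F\|_{L^p(\R^n)}\lesssim R^{n/p}$, and since there are $\lesssim R^{|\vec{\alpha}|}$ contributing caps, $\big(\sum_\gamma\|\mathcal P_\gamma F\|_{L^p(\R^n)}^p\big)^{1/p}\lesssim R^{(|\vec{\alpha}|+n)/p}$.

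Combining these with \eqref{0526.14} gives
\[
  \Big\|\sum_B a_Be(x\cdot\xi_B)\Big\|_{L^p(B_R)}\lesssim_{p,\epsilon} R^{|\vec{\alpha}|(\frac12-\frac1p)+\epsilon}R^{\frac{|\vec{\alpha}|+n}{p}}=R^{\frac{|\vec{\alpha}|}{2}+\frac np+\epsilon},
\]
and dividing by $R^{n/p}$ yields the corollary after renaming $\epsilon$. The only point that is more than bookkeeping is the uniform $L^p$-boundedness of the sharp cap projections $\mathcal P_\gamma$ invoked above; I expect this to be the main, though entirely standard, technical ingredient, and it can in any case be packaged by passing to smooth Fourier cutoffs together with the routine equivalence of smooth- and sharp-cutoff small cap decoupling.
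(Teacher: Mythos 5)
Your proposal is essentially correct, and it follows the standard route from small cap decoupling to exponential sum bounds (the paper itself does not reprove this corollary but simply refers to Corollary~3.2 of \cite{MR4153908}, which is proved the same way). The only step that merits tightening is the bound $\|\mathcal{P}_\gamma F\|_{L^p(\R^n)}\lesssim R^{n/p}$; you attribute it to $L^p$-boundedness of restriction to the set $\gamma-\xi_B$ ``comparable to an intersection of half-spaces,'' but the parabolic face of $\gamma$ is curved, so this requires a word of explanation. The cleanest fix is to observe that the parabolic cutoff is invisible: after replacing $R$ by $R/C_n$, the support of $\widehat F$ is already contained in $\N_{\bP^{n-1}}(C_nR^{-1})$, which is exactly the union of the decoupling caps, so $\mathcal{P}_\gamma F=\mathcal{P}_{B_\gamma\times\R}F$ with $B_\gamma$ the defining rectangular box; the sharp cutoff is then a composition of $n-1$ one-dimensional interval projections and is $L^p$-bounded for $1<p<\infty$ by the boundedness of the Riesz projection. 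Alternatively, one can sidestep multiplier theory entirely: $\|\mathcal{P}^0_{\gamma-\xi_B}\eta_R\|_{L^\infty}\le\|\widehat{\eta_R}\|_{L^1}\sim 1$ and $\|\mathcal{P}^0_{\gamma-\xi_B}\eta_R\|_{L^2}\le\|\eta_R\|_{L^2}\sim R^{n/2}$, and the $L^2$--$L^\infty$ interpolation $\|f\|_{L^p}\le\|f\|_{L^2}^{2/p}\|f\|_{L^\infty}^{1-2/p}$ gives $\|\mathcal{P}^0_{\gamma-\xi_B}\eta_R\|_{L^p}\lesssim R^{n/p}$ directly, which is all you need. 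With either fix the derivation and the exponent bookkeeping are correct.
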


We obtain another type of exponential sum estimate which may be compared to Theorem 3.3 of \cite{MR4153908}. For simplicity, we only state the $n=3$ result. 
\begin{corollary}\label{cor2}
Suppose that $\alpha,\beta\in[\frac{1}{2},1]$, 
and $\alpha+\beta \leq \frac32$. Let $H_1$ be interval of length $N^{\b-\a}$ and let $H_2$ be any interval of length $N^{1-2\a}$. 

For $2 \leq p \leq 2+\frac{2}{\alpha+\beta}$, and any $a_{n_1,n_2}\in\C$ with $|a_{n_1,n_2}|\sim1$, we have
\begin{equation*}
    \begin{split}
         \int_{[0,1]\times H_1\times H_2} &\big|\sum_{\substack{n_1 \in [0,N^{\alpha}] \cap \Z \\n_2 \in [0,N^{\alpha}] \cap N^{\alpha-\beta}\Z }}a_{n_1,n_2}e\big(n_1x_1+n_2x_2+(n_1^2+n_2^2)x_3  \big) \big|^p  \,dx  \\& \leq C_{p,\epsilon} N^{1-(3-\frac{p}{2})\a+(\frac{p}{2}+1)\b+\e}.
    \end{split}
\end{equation*}
\end{corollary}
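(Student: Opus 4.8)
The plan is to deduce Corollary \ref{cor2} from Theorem \ref{0428.thm11} in the case $n=3$ by a standard rescaling-plus-periodicity argument. First I would choose the scale $R = N$ and the parameter $\vec{\alpha} = (\alpha, \beta)$, so that $|\vec{\alpha}| = \alpha + \beta \le \frac32$ puts us exactly in the admissible range $\frac{n-1}{2} \le |\vec{\alpha}| \le \frac{n}{2}$ for $n = 3$, and the exponent range $2 \le p \le 2 + \frac{2}{\alpha+\beta}$ matches. The small caps $\gamma \in \Gamma_{\vec{\alpha}}(N^{-1})$ are slabs over boxes $B$ of dimensions $N^{-\alpha} \times N^{-\beta}$ with $N^{-1}$ thickness in the vertical direction; there are $N^{\alpha+\beta}$ of them. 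The exponential sum in the corollary runs over $n_1 \in [0, N^\alpha] \cap \Z$ and $n_2 \in [0, N^\alpha] \cap N^{\alpha - \beta}\Z$, i.e. $N^\alpha$ choices of $n_1$ and $N^\beta$ choices of $n_2$, for $N^{\alpha+\beta}$ frequencies total, one in each small cap after rescaling the frequency variable by $N^{-1}$ (so $\xi = (n_1/N, n_2/N, (n_1^2+n_2^2)/N^2)$ lies on $\bP^2$ and each such point sits in a distinct cap of $\Gamma_{\vec\alpha}(N^{-1})$).

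Next I would set up the rescaling: let $F(x) = \sum a_{n_1,n_2} e(x \cdot \xi_{n_1,n_2})$ with $\xi_{n_1,n_2}$ a point on the paraboloid as above, viewed as a function whose Fourier transform (a sum of Diracs) is supported in $\N_{\bP^2}(N^{-1})$; to fit the hypothesis of Theorem \ref{0428.thm11} one mollifies each Dirac by a bump adapted to a cap, which costs only constants. Applying \eqref{0526.14} gives control of $\|F\|_{L^p}$ on all of $\R^3$ by $N^{(\alpha+\beta)(\frac12 - \frac1p) + \epsilon}$ times the $\ell^p$-sum of $\|\mathcal{P}_\gamma F\|_{L^p(\R^3)}$. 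Since each $\mathcal{P}_\gamma F$ is (essentially) a single wave packet $a_{n_1,n_2} e(x\cdot \xi_{n_1,n_2})$, its $L^p$ norm over a ball of radius $N$ is $\sim |a_{n_1,n_2}| N^{3/p} \sim N^{3/p}$, so the right side becomes $N^{(\alpha+\beta)(\frac12-\frac1p)+\epsilon} \cdot (N^{\alpha+\beta})^{1/p} \cdot N^{3/p}$. To pass from the $L^p(B_N)$-statement to the integral over the smaller box $[0,1] \times H_1 \times H_2$, I would exploit that the exponential sum is a trigonometric polynomial, periodic (after the scaling) in directions where the frequency spacing is integral: in the $x_1$ variable the frequencies $n_1$ are integers so the sum is $1$-periodic in $x_1$; rescaling $x_2 \mapsto N^{\alpha-\beta} x_2$ and $x_3$ appropriately turns the other frequency gaps integral, so the sum becomes periodic with periods matching the side lengths of $[0,1] \times H_1 \times H_2$ (one checks $N^{\beta-\alpha}$ and $N^{1-2\alpha}$ are exactly the right periods after scaling). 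Thus the average of $|\cdot|^p$ over $[0,1]\times H_1\times H_2$ equals the average over a full period, which equals the average over $B_N$ up to constants, i.e. $N^{-3}\|F\|_{L^p(B_N)}^p$. Collecting exponents: $p \cdot (\alpha+\beta)(\frac12 - \frac1p) + (\alpha+\beta) + 3 - 3 = \frac{p}{2}(\alpha+\beta) - (\alpha+\beta) + (\alpha+\beta) = \frac{p}{2}(\alpha+\beta)$... and then rescaling the measure of the box introduces the factor $|H_1||H_2| = N^{\beta-\alpha} \cdot N^{1-2\alpha} = N^{1 - 3\alpha + \beta}$, and after bookkeeping one should land on the claimed exponent $1 - (3 - \frac{p}{2})\alpha + (\frac{p}{2}+1)\beta$.

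The main obstacle, and the place where care is needed, is the periodicity/rescaling bookkeeping: one has to verify that the anisotropic scaling $x \mapsto (x_1, N^{\alpha-\beta}x_2, \ast x_3)$ simultaneously (i) makes all the frequencies integral so the rescaled sum is genuinely periodic with the box $[0,1]\times H_1 \times H_2$ as a fundamental domain (or an integer number of them), and (ii) transforms the affine image of $\bP^2$ under this scaling back into (a subset of) the standard paraboloid neighborhood $\N_{\bP^2}(N^{-1})$ so that Theorem \ref{0428.thm11} genuinely applies — the paraboloid is not affine-invariant, so one must check the quadratic terms $n_1^2 + n_2^2$ behave correctly under the non-isotropic scaling, which is precisely why the third side length $N^{1-2\alpha}$ (rather than something symmetric in $\alpha, \beta$) appears. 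A clean way to handle this is to first reduce to the canonical small-cap configuration by the change of variables used to define $\Gamma_{\vec\alpha}$, apply the decoupling inequality there, and only at the very end undo the scaling to read off the exponent; the $\epsilon$-loss from \eqref{0526.14} is harmless and gets absorbed into $C_{p,\epsilon}$.
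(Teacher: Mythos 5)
Your overall strategy — parabolic rescaling to put the frequency points on $\bP^2$, applying Theorem \ref{0428.thm11} at scale $R=N$ with $\vec{\alpha}=(\alpha,\beta)$, computing the $L^p$ norms of the resulting one-per-cap wave packets, and relating the small box to $B_N$ by periodicity — is the right one, and your final exponent bookkeeping lands on the claimed answer. However, two steps as written are incorrect, and the first would genuinely fail. The rescaling $\xi = (n_1/N,\, n_2/N,\, (n_1^2+n_2^2)/N^2)$ is wrong: with that normalization the $\xi_1$-spacing is $N^{-1}$, strictly smaller than the cap width $N^{-\alpha}$ whenever $\alpha<1$, so the $N^{\alpha+\beta}$ frequency points do not distribute one per cap of $\Gamma_{\vec{\alpha}}(N^{-1})$ and the decoupled pieces $\mathcal{P}_\gamma F$ are not single wave packets. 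The correct normalization is $\xi_{n_1,n_2}=(n_1/N^\alpha,\, n_2/N^\alpha,\, (n_1^2+n_2^2)/N^{2\alpha})$, arising from the change of variables $y=(N^\alpha x_1, N^\alpha x_2, N^{2\alpha}x_3)$, under which the spacings are exactly $N^{-\alpha}$ and $N^{-\beta}$; your later calculations (the $N^{3/p}$ wave-packet norms, the factor $|H_1||H_2|$) are in fact consistent with this corrected rescaling, not the one you wrote.

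The claimed $x_3$-periodicity is also false and should not be invoked. When $\alpha\ge\beta$ the frequencies $n_1^2+n_2^2$ are integers, so the sum is $1$-periodic in $x_3$; $H_2$, of length $N^{1-2\alpha}\le 1$, is a strict sub-interval of a period, and ``average over $[0,1]\times H_1\times H_2$ equals average over a fundamental domain'' is not true in the $x_3$-direction. The argument nonetheless survives because no $x_3$-periodicity is needed: under the corrected rescaling the image of $[0,1]\times H_1\times H_2$ in $y$-coordinates is a box $D_y=[0,N^\alpha]\times[\ast,\ast+N^\beta]\times[\ast,\ast+N]$, whose third side is already exactly $N=R$, so one only needs periodicity in $y_1$ and $y_2$ (periods $N^\alpha$, $N^\beta$) to extend to $B_N$, giving $\int_{D_y}|G|^p \sim N^{\alpha+\beta-2}\int_{B_N}|G|^p$. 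Combined with the Jacobian $N^{-4\alpha}$ and the decoupling bound $\int_{B_N}|G|^p \lesssim N^{\frac{p}{2}(\alpha+\beta)+3+\epsilon}$, this produces the stated exponent $1-(3-\frac{p}{2})\alpha+(\frac{p}{2}+1)\beta$. Please rewrite the rescaling and periodicity steps accordingly.
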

It is easy to see that both Corollary \ref{cor1} and Corollary \ref{cor2} are essentially sharp by considering the exponential sums with all coefficients equal to $1$. 


Our small cap decoupling for the paraboloid is known to be related to a restriction conjecture for the paraboloid. As far as the authors know, this observation is due to Bourgain (Lemma 6.29 of \cite{MR1097257}). Since this relationship is not explicitly stated there, we state it here. We do not claim that this is new.

Let $d\sigma$ be a surface measure of the truncated paraboloid $\bP^{n-1}$, meaning that for $f\in L^p([0,1]^{n-1})$, we define
\begin{equation}
    \widehat{fd\sigma}(x):=\int_{[0,1]^{n-1}}f(\xi)e(x \cdot (\xi,|\xi|^2) ) \,d\xi.
\end{equation}

\begin{conjecture}[Restriction conjecture, \cite{MR545235}] For $p>2+\frac{2}{n-1}$ we have
\begin{equation}\label{0526.15}
    \|\widehat{f d\sigma}\|_{L^p(\R^n)} \leq C_{p}\|f\|_{L^p([0,1]^{n-1})}.
\end{equation}
    
\end{conjecture}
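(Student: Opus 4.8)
The statement to be proved relates the (conjectural) small cap decoupling to the restriction conjecture \eqref{0526.15}. The plan is to derive \eqref{0526.15} from the conjectural extension of Theorem \ref{0428.thm11} to the full range $\tfrac{n-1}{2}\le|\vec{\alpha}|\le n-1$, used only at its most degenerate endpoint: $\vec{\alpha}=(1,\dots,1)$, so that $|\vec{\alpha}|=n-1$, the critical exponent is $p_c:=2+\tfrac{2}{n-1}=\tfrac{2n}{n-1}$, and $\Gamma_{\vec{\alpha}}(R^{-1})$ is, up to bounded overlap, a partition of $\N_{\bP^{n-1}}(R^{-1})$ into $\sim R^{n-1}$ cubes of sidelength $R^{-1}$. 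For this choice $(n-1)(\tfrac12-\tfrac1{p_c})=\tfrac{n-1}{2n}=\tfrac1{p_c}$, so \eqref{0526.14} reads $\|F\|_{L^{p_c}(\R^n)}\le C_{\epsilon}R^{\frac{n-1}{2n}+\epsilon}\big(\sum_{\gamma}\|\cP_{\gamma}F\|_{L^{p_c}(\R^n)}^{p_c}\big)^{1/p_c}$ for every $F$ with $\widehat{F}$ supported in $\N_{\bP^{n-1}}(R^{-1})$.

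First I would reduce \eqref{0526.15} to a localized estimate with an $\epsilon$-loss: by Tao's $\epsilon$-removal lemma, the clean global bound \eqref{0526.15} for every $p>p_c$ follows once one knows that, for each $\epsilon>0$ and each ball $B_R\subset\R^n$, $\|\widehat{fd\sigma}\|_{L^{p_c}(B_R)}\le C_{\epsilon}R^{\epsilon}\|f\|_{L^{p_c}([0,1]^{n-1})}$ for all large $R$ and all $f$. To prove the latter I would split $|f|$ into $O(\log R)$ dyadic height levels, discard the levels contributing negligibly, and apply the triangle inequality; this costs only $R^{\epsilon}$ and reduces us to the case $f=c\sum_{\gamma'}\1_{\gamma'}$, with $c>0$ fixed and $\gamma'$ ranging over a subfamily of the $R^{-1}$-cubes tiling $[0,1]^{n-1}$. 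Writing $\Gamma'\subset\Gamma_{\vec{\alpha}}(R^{-1})$ for the caps lying over these cubes, one has $\|f\|_{L^{p_c}}\sim(\#\Gamma')^{1/p_c}\,c\,R^{-(n-1)/p_c}$.

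Next, take $F:=\psi_R\,\widehat{fd\sigma}$, where $\psi_R$ is Schwartz with $|\psi_R|\gtrsim1$ on $B_R$, rapidly decaying off $B_R$, and $\widehat{\psi_R}$ supported in a ball of radius $\ll R^{-1}$ about the origin; then $\widehat{F}=\widehat{\psi_R}*(fd\sigma)$ is supported in $\N_{\bP^{n-1}}(R^{-1})$, so the decoupling inequality above applies to $F$. The crux is the single-cap estimate. For $\gamma\in\Gamma_{\vec{\alpha}}(R^{-1})$ the function $\cP_{\gamma}F$ vanishes unless $\gamma\in\Gamma'$ (up to bounded neighbors), its Fourier transform is supported in the $R^{-1}$-cube $\gamma$, and up to the localization $\psi_R$ and $O(1)$ neighboring caps it equals $c\,\widehat{\1_{\gamma'}d\sigma}$; since $|\widehat{\1_{\gamma'}d\sigma}|\le|\gamma'|=R^{-(n-1)}$ pointwise, we obtain $|\cP_{\gamma}F(x)|\lesssim c\,R^{-(n-1)}|\psi_R(x)|$, hence $\|\cP_{\gamma}F\|_{L^{p_c}(\R^n)}\lesssim c\,R^{-(n-1)}\|\psi_R\|_{L^{p_c}}\sim c\,R^{-(n-1)}R^{n/p_c}$. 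Inserting this into the decoupling inequality, bounding its left side below by $\|\widehat{fd\sigma}\|_{L^{p_c}(B_R)}$ (using $|\psi_R|\gtrsim1$ on $B_R$), and using $\tfrac1{p_c}=\tfrac{n-1}{2n}$, one checks that every power of $R$ cancels and the estimate collapses to $\|\widehat{fd\sigma}\|_{L^{p_c}(B_R)}\le C_{\epsilon}R^{\epsilon}(\#\Gamma')^{1/p_c}\,c\,R^{-(n-1)/p_c}\sim C_{\epsilon}R^{\epsilon}\|f\|_{L^{p_c}}$, which is the desired localized estimate.

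The whole argument turns on that exact cancellation of powers of $R$, which occurs only when $\vec{\alpha}=(1,\dots,1)$ and $p=p_c$: a one-line check shows the ``flat on $R^{-1}$-caps'' input forces $p\ge\tfrac{2n}{|\vec{\alpha}|}$, while the decoupling hypothesis supplies $p\le2+\tfrac{2}{|\vec{\alpha}|}$, and these are compatible only at $|\vec{\alpha}|=n-1$, $p=p_c$ --- which is just the statement that at the finest scale the extremal configuration of the decoupling inequality is the Knapp example for restriction. So there is no real obstacle beyond bookkeeping, but two points should be flagged: (i) the decoupling being used is only \emph{conjectured} in the range $|\vec{\alpha}|\le n-1$, whereas Theorem \ref{0428.thm11} provides it unconditionally only for $|\vec{\alpha}|\le n/2$, so what is actually established is the conditional implication that full-range small cap decoupling implies the restriction conjecture; and (ii) the passage from the localized $R^{\epsilon}$-estimate at $p_c$ to the clean bound \eqref{0526.15} for $p>p_c$ uses Tao's $\epsilon$-removal as a black box. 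The converse implication (the one essentially due to Bourgain) is proved the same way in reverse --- slicing $\widehat{F}$ in the vertical direction, applying the restriction estimate to each slice, and using the near-orthogonality of the caps --- so the two statements are in fact essentially equivalent.
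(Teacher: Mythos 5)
Your proposal is, up to some hand-waving, a correct argument for the implication in Proposition~\ref{0526.prop14}: conjectural small cap decoupling at $|\vec{\alpha}|=n-1$, $p_c=\frac{2n}{n-1}$ implies the localized restriction estimate at $p_c$, which by Tao's $\epsilon$-removal yields the conjecture. The end computation checks out. However, the route you take differs from the paper's in the middle step. The paper invokes the equivalence of Proposition~\ref{0526.prop41} (a local restriction/dual-extension equivalence in the style of \cite{MR1625056}), which converts the target to the estimate $\|F\|_{L^p(B_R)}\lesssim R^{-(1-\frac1p)+\epsilon}\|\widehat{F}\|_{L^p}$ for \emph{arbitrary} $F$ Fourier-supported in $\N_{\bP^{n-1}}(R^{-1})$, and then proves a single-cap inequality $\|\cP_\gamma F\|_{L^p}\lesssim|\gamma|^{2(\frac12-\frac1p)}\|\widehat{\cP_\gamma F}\|_{L^p}$ via Plancherel, Hausdorff--Young and H\"older. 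You instead pigeonhole $f$ by height and reduce to $f$ constant on $R^{-1}$-cubes, then feed the Knapp extension into the decoupling and estimate each cap piece pointwise by $|\gamma'|\,|\psi_R|$. Both are legitimate; the paper's version is cleaner because it never needs to reduce to test functions and yields the statement uniformly for all $\frac{n-1}2\le|\vec{\alpha}|\le n-1$, while your approach is more explicit (it makes the Knapp-example tightness visible) but currently more hand-waved in the pigeonholing reduction.

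One inaccuracy worth flagging: your remark that the exponent cancellation ``occurs only when $\vec{\alpha}=(1,\dots,1)$'' and that ``the flat on $R^{-1}$-caps input forces $p\ge\frac{2n}{|\vec{\alpha}|}$'' is an artifact of the coarse bound $\|\psi_R\|_{L^{p}}\sim R^{n/p}$. The extension $\widehat{\1_{\gamma'}d\sigma}$ concentrates on the dual slab $T_\gamma$ of dimensions $R^{\alpha_1}\times\cdots\times R^{\alpha_{n-1}}\times R$, of measure $R^{|\vec{\alpha}|+1}$, not on the whole ball $B_R$. If you replace $R^{n/p}$ by $|T_\gamma|^{1/p}=R^{(|\vec{\alpha}|+1)/p}$ in your single-cap estimate, the constraint from the cancellation becomes exactly $p\ge 2+\frac{2}{|\vec{\alpha}|}$, matching the decoupling threshold for every $\vec{\alpha}$. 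This recovers the full generality of the paper's Proposition~\ref{0526.prop14}, not just the endpoint case $|\vec{\alpha}|=n-1$. Of course, for the Restriction Conjecture as stated (all $p>p_c$) the $|\vec{\alpha}|=n-1$ case is the one that matters, so your restricted argument does suffice for the stated goal; but it is worth knowing that the implication is not special to that endpoint.
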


\begin{proposition}\label{0526.prop14}
Let $n \geq 2$ and $\frac{n-1}{2} \leq |\vec{\alpha}| \leq n-1$.
 If the small cap decoupling inequality
\eqref{0526.14} is true  for  $p=2+\frac{2}{|\vec{\alpha}|}$, then the restriction conjecture \eqref{0526.15} is true  for the same exponent $p$.

Moreover, if the restriction conjecture is true for $p>2+\frac{2}{n-1}$, then the small cap decoupling is true for $p=2+\frac{2}{n-1}$ with $|\vec{\alpha}|=n-1$.
\end{proposition}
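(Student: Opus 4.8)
The plan is to establish the two implications separately. Both are ``soft'' given their inputs: each comes down to a couple of standard Fourier-analytic manipulations plus a bookkeeping of exponents that, as it turns out, balances exactly at the critical $p$.

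\emph{First implication (small cap decoupling $\Rightarrow$ restriction).} Fix $p=2+\frac2{|\vec{\alpha}|}$ and $f\in L^p([0,1]^{n-1})$. I would follow the usual ``decoupling implies restriction'' scheme. First localize to a ball $B_R$ and replace $\widehat{fd\sigma}$ by a function with thickened Fourier support: choosing a Schwartz bump $\psi$ with $\widehat\psi$ bounded below on $[-1,1]$ and defining $\widehat{F_R}(\xi',\xi_n):=Rf(\xi')\psi\big(R(\xi_n-|\xi'|^2)\big)$, one gets $F_R(x)=\widehat\psi(-x_n/R)\,\widehat{fd\sigma}(x)$, so that $\widehat{F_R}$ is supported in $\N_{\bP^{n-1}}(CR^{-1})$ and $\|\widehat{fd\sigma}\|_{L^p(B_R)}\lesssim\|F_R\|_{L^p(\R^n)}$. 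Then apply Theorem~\ref{0428.thm11} to $F_R$. For each $\gamma\in\Gamma_{\vec{\alpha}}(R^{-1})$, since the base box $B$ has side lengths $R^{-\alpha_i}\le R^{-1/2}$, the arc of $\bP^{n-1}$ over $B$ — hence $\supp\widehat{\cP_\gamma F_R}$ — lies in a box of volume $\sim R^{-|\vec{\alpha}|-1}$, so Bernstein's inequality and Plancherel give $\|\cP_\gamma F_R\|_{L^p(\R^n)}\lesssim R^{\frac12-(|\vec{\alpha}|+1)(\frac12-\frac1p)}\|f\1_B\|_{L^2}$. Summing over $\gamma$ (the boxes $B$ tile $[0,1]^{n-1}$), going from $L^2$ to $L^p$ on each $B$ via Hölder (with $|B|=R^{-|\vec{\alpha}|}$), and inserting the factor $R^{|\vec{\alpha}|(\frac12-\frac1p)+\epsilon}$ from decoupling, every power of $R$ cancels except $R^{\frac12-(|\vec{\alpha}|+1)(\frac12-\frac1p)}$; and $\frac12=(|\vec{\alpha}|+1)(\frac12-\frac1p)$ precisely when $p=2+\frac2{|\vec{\alpha}|}$. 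This gives the local estimate $\|\widehat{fd\sigma}\|_{L^p(B_R)}\lesssim_\epsilon R^\epsilon\|f\|_{L^p}$ uniformly in $R$, and I would then invoke a standard $\epsilon$-removal argument (in the spirit of Tao's lemma) to pass to the global restriction estimate \eqref{0526.15}.

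\emph{Second implication (restriction $\Rightarrow$ small cap decoupling).} Assume \eqref{0526.15} for all $p>2+\frac2{n-1}$, put $p_0:=2+\frac2{n-1}=\frac{2n}{n-1}$, and take $\vec{\alpha}=(1,\dots,1)$, so each $\gamma\in\Gamma_{\vec{\alpha}}(R^{-1})$ is an $R^{-1}$-cube up to the $O(R^{-2})$ curvature of $\bP^{n-1}$. After the routine localization I may assume $F$ is concentrated on a ball of radius $\sim R$. Fibering the slab $\N_{\bP^{n-1}}(R^{-1})$ over the paraboloid, Fourier inversion gives the identity
\begin{equation*}
F(x)=\int_{|t|\lesssim R^{-1}}e(x_nt)\,\widehat{g_td\sigma}(x)\,dt,\qquad g_t(\xi'):=\widehat F(\xi',|\xi'|^2+t).
\end{equation*}
For $p\in(p_0,p_0+\delta)$, I would apply Minkowski's inequality, the restriction estimate at exponent $p$ to each fiber $g_t$, Hölder in $t$ over an interval of length $\sim R^{-1}$, and the change of variables $(\xi',t)\mapsto(\xi',|\xi'|^2+t)$ to obtain $\|F\|_{L^p(\R^n)}\lesssim_p R^{-1/p'}\|\widehat F\|_{L^p(\N_{\bP^{n-1}}(R^{-1}))}$. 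Since $\|\widehat F\|_{L^p(\N_{\bP^{n-1}}(R^{-1}))}^p=\sum_\gamma\|\widehat{\cP_\gamma F}\|_{L^p(\gamma)}^p$ and, for each $\gamma$, $\|\widehat{\cP_\gamma F}\|_{L^p(\gamma)}\le|\gamma|^{1/p}\|\cP_\gamma F\|_{L^1}\lesssim R^{n(1-2/p)}\|\cP_\gamma F\|_{L^p(\R^n)}$ (Hölder, using that $\cP_\gamma F$ is concentrated on a ball of radius $\sim R$), this yields
\begin{equation*}
\|F\|_{L^p(\R^n)}\lesssim_p R^{-1/p'+n(1-2/p)}\Big(\sum_{\gamma\in\Gamma_{\vec{\alpha}}(R^{-1})}\|\cP_\gamma F\|_{L^p(\R^n)}^p\Big)^{1/p}.
\end{equation*}
A direct computation gives $-1/p_0'+n(1-2/p_0)=(n-1)(\tfrac12-\tfrac1{p_0})=|\vec{\alpha}|(\tfrac12-\tfrac1{p_0})$, which is exactly the exponent we want. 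I would then let $p\downarrow p_0$: the Hölder loss $R^{n(1/p_0-1/p)}$ needed to compare $\|F\|_{L^{p_0}}$ with $\|F\|_{L^p}$ is cancelled precisely by the Bernstein gain $R^{-n(1/p_0-1/p)}$ relating $\|\cP_\gamma F\|_{L^p}$ to $\|\cP_\gamma F\|_{L^{p_0}}$, and since $-1/p'+n(1-2/p)$ is continuous in $p$, choosing $p$ close enough to $p_0$ turns the displayed inequality into \eqref{0526.14} at $p=p_0$ with an arbitrarily small $R^\epsilon$ loss.

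\emph{The main obstacle.} I expect the genuine difficulty to be the $\epsilon$-removal step in the first implication: Theorem~\ref{0428.thm11} produces only a \emph{local}, $R^\epsilon$-lossy restriction inequality on $B_R$, and upgrading it to the global estimate \eqref{0526.15} truly requires an $\epsilon$-removal argument — which is also the reason the implication is stated at the endpoint exponent $p=2+\frac2{|\vec{\alpha}|}$, where the surplus powers of $R$ happen to cancel down to $R^\epsilon$. The rest — the two fiberwise/Bernstein manipulations, and in particular the arithmetic verifying that the $R$-exponents balance exactly at the critical $p$ — is routine. In the converse implication the only wrinkle is that restriction is hypothesized in the open range $p>2+\frac2{n-1}$; this is harmless, since one simply runs the argument at $p_0+\delta$ and absorbs the resulting discrepancy into the $\epsilon$ of \eqref{0526.14}.
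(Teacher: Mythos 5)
Your proof is correct and your exponent bookkeeping checks out: at $p=2+\tfrac{2}{|\vec\alpha|}$ one has $\tfrac12-\tfrac1p=\tfrac{1}{2(|\vec\alpha|+1)}$, so the surplus $R^{\frac12-(|\vec\alpha|+1)(\frac12-\frac1p)}$ in the forward direction vanishes, and the verification $-1/p_0'+n(1-2/p_0)=(n-1)(\tfrac12-\tfrac1{p_0})$ in the converse is exactly the exponent the paper targets. The overall strategy coincides with the paper's, but you replace the paper's black box with explicit computations in two places, and in the converse direction the technical route is genuinely different, so the comparison is worth spelling out.

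For the forward implication, the paper reduces via $\epsilon$-removal to the local estimate and then invokes the standard equivalence (its Proposition~\ref{0526.prop41}, after Carbery) between the local restriction estimate and the thickened inequality $\|F\|_{L^p(B_R)}\lesssim R^{-(1-1/p)+\epsilon}\|\widehat F\|_{L^p}$; it then proves the latter from decoupling plus a term-by-term bound $\|\cP_\gamma F\|_{L^p}\lesssim|\gamma|^{2(\frac12-\frac1p)}\|\widehat{\cP_\gamma F}\|_{L^p}$ obtained by interpolating $L^2$-Plancherel with Hausdorff--Young/Cauchy--Schwarz. You do the same thing but concretely: build the extension $F_R$ with $\widehat{F_R}(\xi',\xi_n)=Rf(\xi')\psi(R(\xi_n-|\xi'|^2))$, apply decoupling, and use Bernstein plus Hölder on each base box $B$. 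These are the same termwise estimates in different clothing. For the converse, the paper again routes through Proposition~\ref{0526.prop41} (restriction $\Rightarrow$ thickened estimate) and then converts $\|\widehat F\|_{L^p}$ to $\big(\sum_\gamma\|\cP_\gamma F\|_{L^p}^p\big)^{1/p}$ by inserting a smooth partition of unity $\{\Xi_\gamma\}$ in frequency and applying Hausdorff--Young at the exponent pair $(p,p')$. You instead re-derive the thickened estimate directly from the fibering identity $F(x)=\int_{|t|\lesssim R^{-1}}e(x_nt)\,\widehat{g_td\sigma}(x)\,dt$ together with Minkowski and Hölder in $t$ (which is one standard proof of the direction of Proposition~\ref{0526.prop41} you need), and your conversion uses the chain $\|\widehat{\cP_\gamma F}\|_{L^p(\gamma)}\le|\gamma|^{1/p}\|\cP_\gamma F\|_{L^1}\lesssim|\gamma|^{1/p}R^{n/p'}\|\cP_\gamma F\|_{L^p}$, i.e.\ Hausdorff--Young at $(\infty,1)$ together with Hölder on the ball; this avoids having to build the partition $\Xi_\gamma$ and is slightly leaner. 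Both conversions land on the same power of $R$, and your observation about the Hölder/Bernstein losses cancelling as $p\downarrow p_0$ (together with $\ell^{p_0}\hookrightarrow\ell^p$) is the same as the paper's ``take $p$ sufficiently close to $p_c$'' step. The one place worth flagging is the final ``sum over balls $B_R$'': you need $\gamma$ to be slightly fattened to accommodate the $O(R^{-1})$ Fourier spread from multiplying by $\psi_{B_R}$; the paper elides this too, so it's an acceptable level of compression.
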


We refer to \cite{wang2022improved} and \cite{guo2023dichotomy} for the best known bounds of the restriction conjecture. Our small cap decoupling implies the restriction estimate for $p>2+\frac{4}{n}$, which is obtained by \cite{MR2033842}.



To prove Theorem \ref{0428.thm11}, we repeat the main ideas of \cite{MR4153908}. The first step is a broad/narrow reduction, which we do in \textsection\ref{broad/narrow}. Then in \textsection\ref{mainarg}, we prove a multilinear small cap by (1) interpolating refined canonical decoupling with multilinear restriction and then (2) applying a refined flat decoupling to go from the canonical caps to small caps. The remaining step is to verify that this sequence of inequalities yields the desired right hand side of small cap decoupling. This verification follows from an $L^2$ based high-low frequency analysis which is done in \textsection\ref{kak}. Finally, we verify Proposition \ref{0526.prop14}, which records the restriction implications of our small cap theorem, in \textsection\ref{rest}.

\subsection*{Notation}

For a sequence $\{a_i\}_{i=1}^{n}$, we use the notation 
\begin{equation}
\avprod_{i=1}^{n}{a_i}:=\prod_{i=1}^{n}|a_i|^{\frac1n}.
\end{equation}

Given a ball $B_K(a)$ of radius $K$ centered at $a$, define the weight function adapted to the ball by
\begin{equation}\label{0517.16}
    w_{B_K}(x):=\big(1+\frac{|x-a|}{K}\big)^{-100n}.
\end{equation}

For two non-negative numbers $A_1$ and $A_2$, we write $A_1 \lesssim A_2$ to mean that there exists a constant $C$ such that $A_1 \le C A_2$. We write $A_1 \sim A_2$ if $A_1 \lesssim A_2$ and $A_2 \lesssim A_1$. We also write $A_1 \lesssim_{\epsilon} A_2$ if there exists $C_{\epsilon}$ depending on a parameter $\epsilon$ such that $A_1 \leq C_{\epsilon}A_2$.

For a rectangular box $T$, we denote by $CT$ the dilation of $T$ by a factor of $C$ with respect to its centroid. 

For a function $f:\R^n \rightarrow \C$ and a positive function $w:\R^n \rightarrow \R$, define 
\begin{equation}
    \|f\|_{L^p(w)}= \Big(\int_{\R^n}|f(x)|^p w(x)\,dx\Big)^{\frac1p}.
\end{equation}

\subsection*{Acknowledgements}

We would like to thank Ciprian Demeter, Yuqiu Fu and Shengwen Gan for valuable discussions. LG is supported by a Simons Investigator award. DM is supported by the National Science Foundation under Award No. 2103249. 

\section{Reduction to a multilinear small cap decoupling \label{broad/narrow}}

In this section, we reduce our decoupling problem to a multilinear small cap decoupling inequality. Let us introduce some definitions to state the multilinear decoupling.
\\

Denote by $n(\xi)$ the unit normal vector of $\bP^{n-1}$ at the point $\xi \in \bP^{n-1}$. We say that
$n$ many points $\xi^{(1)}, \ldots, \xi^{(n)} \in \bP^{n-1}$ are $A$-transverse if
\begin{equation}
    n(\xi^{(1)}) \wedge \cdots \wedge n(\xi^{(n)}) \geq A>0.
\end{equation}
Subsets $\tau_1,\ldots,\tau_{n}$ of the set $\N_{\bP^{n-1}}(R^{-1})$ are called $A$-transverse if  
\begin{equation}
    n(\xi^{(1)}) \wedge \cdots \wedge n(\xi^{(n)}) \geq A>0.
\end{equation}
for any points $\xi^{(i)} \in \tau_i \cap \bP^{n-1}$. 

\begin{theorem}[Multilinear small cap decoupling]\label{0428.thm21}
Let $n \geq 2$, $\frac{n-1}{2} \leq |\vec{\alpha}| \leq \frac{n}{2}$, and $A>0$.
Assume that $F_i: \R^n \rightarrow \C$ has the Fourier transform supported on $\N_{\bP^{n-1}}(R^{-1})$ and the Fourier supports of $F_i$ are $A$-transverse. 
Then for  $2 \leq p \leq 2+\frac{2}{|\vec{\alpha}| }$ and $\epsilon>0$
\begin{equation}\label{0429.23}
    \Big\| \avprod_{i=1}^n F_i \Big\|_{L^p(\R^n)} \leq C_{A,p,\epsilon} R^{|\vec{\alpha}|(\frac12-\frac1p)+\epsilon} \avprod_{i=1}^n \Big( \sum_{\gamma \in \Gamma_{\vec{\alpha}}(R^{-1}) }\| \mathcal{P}_{\gamma}F_i \|_{L^p(\R^n)}^p \Big)^{\frac1p}.
\end{equation}
The constant $C_{A,p,\epsilon}$ depends on a choice of $A,p$ and $\epsilon$.
\end{theorem}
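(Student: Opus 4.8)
The plan is to obtain \eqref{0429.23} by composing three inequalities and then checking, by an $L^2$-based high--low frequency analysis, that their exponents sum to $|\vec\alpha|(\tfrac12-\tfrac1p)$; this parallels the outline used for the parabola in \cite{MR4153908}. After localizing to a ball $B_R$ and the usual pigeonholing, we may assume that for each $i$ the norms $\|\cP_\gamma F_i\|_{L^p}$ are comparable over $\gamma\in\Gamma_{\vec\alpha}(R^{-1})$, and likewise the canonical pieces $\cP_\theta F_i$ over the $\sim R^{(n-1)/2}$ caps $\theta$ of $\bP^{n-1}$ at scale $R^{-1/2}$, at the cost of only a factor $R^\epsilon$.

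The first ingredient acts at the canonical scale: we combine a refined form of Bourgain--Demeter $\ell^2$ decoupling into the caps $\theta$ (localized to the union of $R^{1/2}$-cubes carrying the dominant wave packets) with the Bennett--Carbery--Tao multilinear restriction estimate, which for $n$ many $A$-transverse caps $\theta_i$ gives, when $p\ge\tfrac{2n}{n-1}$,
\[
\Big\|\avprod_{i=1}^n\cP_{\theta_i}F_i\Big\|_{L^p(B_R)}\lesssim_\epsilon R^\epsilon\avprod_{i=1}^n\|\cP_{\theta_i}F_i\|_{L^p(w_{B_R})}.
\]
Running these two together in a weighted form so that they genuinely compose (and, crucially, not via a crude interpolation) yields a multilinear decoupling of $\avprod_i F_i$ into the canonical caps $\theta$ that improves on the bare $\ell^2$ bound; the improvement is a power $R^{(|\vec\alpha|-(n-1)/2)(\tfrac1p-\tfrac12)}$, precisely the slack that the next step consumes.

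The second ingredient is flat decoupling at fine scales. Inside a canonical cap $\theta$ the paraboloid lies within $R^{-1}$ of its tangent hyperplane, so the part of $\N_{\bP^{n-1}}(R^{-1})$ in $\theta$ is, up to curvature error, a box of dimensions $R^{-1/2}\times\cdots\times R^{-1/2}\times R^{-1}$ partitioned by the small caps $\gamma\subset\theta$ into $\prod_{i=1}^{n-1}R^{\alpha_i-1/2}=R^{|\vec\alpha|-(n-1)/2}$ congruent slabs; flat $\ell^p$ decoupling passes from the $\theta$ to the $\gamma$ at cost $R^{(|\vec\alpha|-(n-1)/2)(1-\tfrac2p)}$, again in a refined weighted form. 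Concatenating the decoupling into canonical caps with this flat step produces \eqref{0429.23} with some exponent $e(p)$ in place of $|\vec\alpha|(\tfrac12-\tfrac1p)$, and it remains to check that $e(p)=|\vec\alpha|(\tfrac12-\tfrac1p)$ on all of $2\le p\le 2+\tfrac2{|\vec\alpha|}$. Its value is forced at the two ends ($0$ at $p=2$ by $L^2$-orthogonality, and $\tfrac1p$ at the endpoint $p=2+\tfrac2{|\vec\alpha|}$), and since every exponent in the chain is affine in $\tfrac1p$, these two constraints pin $e$ down; matching them requires running the combination at the canonical scale at exactly the sharp efficiency. This is what the $L^2$ high--low decomposition in the spirit of Guth--Maldague--Wang supplies: at each dyadic frequency scale between $R^{-1/2}$ and the small-cap scales $R^{-\alpha_i}$, split the relevant square function into a low part, handled losslessly by orthogonality, and a high part, handled by the local constant together with multilinear restriction/Kakeya, and iterate. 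This high--low analysis is the technical heart of the argument.

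I expect the main obstacle to be exactly this sharpness. A blunt interpolation between the $L^2$ estimate and the multilinear-restriction exponent does not reach the endpoint $p=2+\tfrac2{|\vec\alpha|}$, so the high--low iteration must be carried out with no loss; and because the small caps are \emph{anisotropic} (the $\alpha_i$ need not agree) the dyadic scales have to be processed direction by direction, with the adapted weights $w_{B_K}$ and the $A$-transversality of the Fourier supports propagated correctly through every parabolic rescaling. A convenient way to handle the anisotropy is to first establish the isotropic case $\alpha_i\equiv|\vec\alpha|/(n-1)$ and then recover the general case by a further flat decoupling in the over-refined directions; checking that this last maneuver does not degrade the exponent is a second, bookkeeping-heavy, point.
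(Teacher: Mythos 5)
Your three-ingredient outline (refined canonical decoupling plus Bennett--Carbery--Tao, then refined flat decoupling, then a high-low argument) is indeed the paper's strategy, but the proposal has genuine gaps.

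The isotropic reduction does not work. If some $\alpha_j \neq |\vec\alpha|/(n-1)$, then the isotropic cap of side $R^{-|\vec\alpha|/(n-1)}$ and the anisotropic small cap $\gamma$ are \emph{incomparable}: $\gamma$ is strictly smaller in the directions where $\alpha_j > |\vec\alpha|/(n-1)$ but strictly larger where $\alpha_j < |\vec\alpha|/(n-1)$. Flat decoupling only passes from coarser caps to finer ones, so there is no flat-decoupling step that recovers the anisotropic statement from the isotropic one. The paper handles the anisotropy directly: the dyadic frequency decomposition in the high-low step is built from dilations of the anisotropic box $B(\alpha) = \prod_{j}[-R^{-\alpha_j},R^{-\alpha_j}] \times [-R^{-1/2},R^{-1/2}]$, which is precisely what keeps the bookkeeping manageable.

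The exponent-matching heuristic is circular. Saying that $e(p)$ is affine in $1/p$ and pinned at the ends presupposes that your chain of inequalities produces the correct exponent at $p_c = 2 + \tfrac{2}{|\vec\alpha|}$; it does not establish it. What the paper actually does, after pigeonholing so that each canonical cap $\theta$ carries $\sim N_i$ tubes per $(R^{\vec\alpha},R)$-slab and each $R^{1/2}$-cube $q \in S_{\vec r}$ meets $\sim r_i$ tubes of $\T_i$, is reduce the whole thing to a single incidence inequality of the form
\begin{equation*}
\# \mathcal{Q}_{\vec{r}} \lesssim_{\epsilon} R^{\epsilon}\,(\# \T_i)\, \Bigl(\frac{R^{(n-1)/2}\, N_i}{r_i^{\,2|\vec\alpha|-n+2}}\Bigr)^{\frac{1}{2|\vec\alpha|-n+1}},
\end{equation*}
and it is this estimate (Proposition \ref{0429.prop35}) that makes the exponents close at $p_c$. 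Your proposal never names this reduction, and without it the ``$L^2$ high-low'' step has no concrete target.

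Finally, your description of the high-low step is not the one the paper uses. There is no iterated square-function splitting across all intermediate scales. Instead one forms the Kakeya function $K = \sum_{T\in\T_i} v_T$, convolves against bumps $\eta_t$ adapted to the dyadic shells $(2t/t_0)B(\alpha)\setminus(t/t_0)B(\alpha)$, bounds each high-frequency piece $\|K*\eta_t\|_{L^2}^2$ by a counting lemma for $\theta-\theta$ (Lemma \ref{0430.lem36}) combined with the per-slab count $N_i$, and handles the single low-frequency piece by an $L^1$ estimate using the definition of $N_i$. This one-shot decomposition proves the incidence bound; it is not an iteration.
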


\begin{proposition}\label{0428.prop22}

    Theorem \ref{0428.thm11} is true under the assumption of Theorem \ref{0428.thm21}.
\end{proposition}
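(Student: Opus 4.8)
The plan is to derive the linear decoupling of Theorem \ref{0428.thm11} from the multilinear version of Theorem \ref{0428.thm21} by a standard Bourgain--Guth broad/narrow decomposition. The point where the multilinearity is "spent" is the passage from the global estimate on $\|F\|_{L^p}$ to a maximum over either a broad term, controlled by Theorem \ref{0428.thm21}, or a narrow term, controlled by a lower-dimensional (or smaller-scale) instance of the same inequality.

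\emph{Setup.} Fix $F$ with $\widehat F$ supported on $\N_{\bP^{n-1}}(R^{-1})$, and let $K$ be a large constant (to be chosen depending on $\e$, e.g. $K = R^{\delta}$ for a tiny $\delta$, or a fixed constant with the losses absorbed at the end). Partition $[0,1]^{n-1}$ into caps $\tau$ of side $K^{-1}$ and write $F = \sum_\tau F_\tau$ where $F_\tau = \cP_{\tau}F$ (here I conflate $\tau$ with the slab $(\tau\times\R)\cap\N_{\bP^{n-1}}(R^{-1})$). For each point $x$, sort the caps by the size of $|F_\tau(x)|$. Either a single cap dominates, $\|F(x)\| \lesssim K^{O(1)} \max_\tau |F_\tau(x)|$ (more precisely there is a significant cap), or there are $n$ caps $\tau_1,\dots,\tau_n$ that are $\sim K^{-O(1)}$-transverse with $|F(x)| \lesssim K^{O(1)} \avprod_{i=1}^n |F_{\tau_i}(x)|$; this dichotomy is the content of the Bourgain--Guth lemma, using that if no $n$ caps are quantitatively transverse then all large caps lie in a neighborhood of a hyperplane, forcing one cap to dominate up to a $K$-loss. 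Raising to the $p$-th power and integrating,
\begin{equation*}
\|F\|_{L^p(\R^n)}^p \lesssim K^{O(1)}\Big( \sum_{\tau} \|F_\tau\|_{L^p(\R^n)}^p \;+\; \sum_{\tau_1,\dots,\tau_n\ \mathrm{transv.}} \Big\| \avprod_{i=1}^n F_{\tau_i}\Big\|_{L^p(\R^n)}^p\Big).
\end{equation*}

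\emph{Handling the two terms.} The broad term is estimated by Theorem \ref{0428.thm21} with $A \sim K^{-O(1)}$ (there are only $O(K^{O(1)})$ transverse $n$-tuples, so the sum over tuples costs a $K$-power): for each tuple, $\|\avprod_i F_{\tau_i}\|_{L^p} \lesssim_{\e} R^{|\vec\alpha|(\frac12-\frac1p)+\e} \avprod_i (\sum_{\gamma} \|\cP_\gamma F_{\tau_i}\|_{L^p}^p)^{1/p} \le R^{|\vec\alpha|(\frac12-\frac1p)+\e} \max_i (\sum_\gamma \|\cP_\gamma F_{\tau_i}\|_{L^p}^p)^{1/p} \le R^{|\vec\alpha|(\frac12-\frac1p)+\e} (\sum_{\gamma\in\Gamma_{\vec\alpha}(R^{-1})} \|\cP_\gamma F\|_{L^p}^p)^{1/p}$, since the $\gamma$'s inside distinct $\tau_i$ are disjoint. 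For the narrow term, each $F_\tau$ has Fourier support in a single $K^{-1}$-cap; after an affine rescaling (parabolic rescaling that maps the cap $\tau$ to the full paraboloid at scale $R/K^2$, combined with the anisotropic nature of the small caps $\gamma$), the estimate for $F_\tau$ reduces to the same inequality \eqref{0526.14} at scale $R' = R/K^{2}$ (or $RK^{-1}$, depending on the cap geometry) with an appropriately rescaled parameter vector — and one checks the resulting constant, summed over the $\sim K^{n-1}$ caps $\tau$, beats $R^{|\vec\alpha|(\frac12-\frac1p)+\e}$ provided $K$ is large relative to $\e$. One then runs an induction on $R$ (or on the number of caps) to close: the narrow contribution produces a constant of the form $C_{p,\e}(R/K^c)^{|\vec\alpha|(\frac12-\frac1p)+\e}$ times $K^{O(1)}$, which is $\le \frac12 C_{p,\e}R^{|\vec\alpha|(\frac12-\frac1p)+\e}$ once $K^{c\e} \gg K^{O(1)}$.

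\emph{Main obstacle.} The delicate point is the rescaling step for the narrow term: the small caps $\gamma\in\Gamma_{\vec\alpha}(R^{-1})$ are genuinely anisotropic boxes of dimensions $R^{-\alpha_1}\times\cdots\times R^{-\alpha_{n-1}}\times R^{-1}$, so the parabolic rescaling associated to a $K^{-1}$-cap $\tau$ does not map the family $\Gamma_{\vec\alpha}(R^{-1})$ restricted to $\tau$ cleanly onto $\Gamma_{\vec\alpha}((R/K^2)^{-1})$ — the side-length exponents transform, and one must verify that the exponent in the resulting inequality and the power of $K$ still match up so that the induction closes within the stated range $2 \le p \le 2 + \frac2{|\vec\alpha|}$, $\frac{n-1}2 \le |\vec\alpha| \le \frac n2$. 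I expect this bookkeeping — tracking how $|\vec\alpha|$ and the scale interact under rescaling, and confirming no loss outside the claimed range — to be the technical heart of the argument; the broad/narrow dichotomy itself and the handling of the broad term via Theorem \ref{0428.thm21} are routine.
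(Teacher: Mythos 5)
The broad--narrow framework you describe is the right skeleton, and your handling of the broad term (apply Theorem \ref{0428.thm21} to each transverse tuple, absorb the $O(K^{O(1)})$ tuple count into the constant) matches the paper. But your treatment of the narrow term has a genuine gap that prevents the induction from closing when $n\ge 3$.

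\textbf{The gap.} Your pointwise dichotomy gives, on the narrow set, $|F(x)|\lesssim K^{O(1)}\max_\tau |F_\tau(x)|$, leading after integration to a narrow contribution of the form $K^{O(1)}\big(\sum_\tau\|F_\tau\|_{L^p}^p\big)^{1/p}$. You then propose to rescale each $\tau$, invoke the induction hypothesis at scale $R/K^2$, and finish with flat decoupling. The issue is quantitative: the $K^{O(1)}$ loss from the crude cap-to-slab-to-single-cap step is a fixed positive power of $K$ that does not shrink with $p$, while the gain you harvest from the rescaling plus the penalty from flat decoupling combine to only a power like $K^{2(\frac12-\frac1p)(|\vec\alpha|-n+1)}$ (plus the tiny $K^{-O(\epsilon)}$ from the induction hypothesis). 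At $p=p_c=2+\frac{2}{|\vec\alpha|}$ the factor $\frac12-\frac1p=\frac{1}{2(|\vec\alpha|+1)}$ is bounded by a constant $<\frac12$, so the favorable exponent has magnitude at most $O(1)$ and cannot beat the crude $K^{O(1)}$ coming from the slab when $n\ge 3$ (for $n=2$ there is no slab and your approach would in fact be fine, which is why the corresponding $n=2$ argument in \cite{MR4153908} does not need this refinement). Taking $K=R^\delta$ does not rescue this: the $\epsilon$-gain in the induction at scale $R/K^2$ is $K^{-2\epsilon}=R^{-2\delta\epsilon}$, which is far too small to absorb an $R^{O(\delta)}$ loss.

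\textbf{What the paper does instead.} The paper runs the broad--narrow analysis locally on balls $B_K$ (so the collection of significant caps is a fixed set per ball), and in the narrow case identifies a single slab $L$ of dimensions $\sim K^{-1/2}\times 1\times\cdots\times 1$ containing all significant caps. It then applies the Bourgain--Demeter $\ell^p$ decoupling theorem for the lower-dimensional paraboloid $\bP^{n-2}$ to pass from $L$ to the caps $\tau\in\Theta(K^{-1})$, picking up only the sharp $K^{\frac{n-2}{2}(\frac12-\frac1p)+\epsilon}$ loss rather than a crude $K^{O(1)}$. Combined with the parabolic rescaling (which gains $K^{-|\vec\alpha|(\frac12-\frac1p)}$, since the paper's caps have side $K^{-1/2}$, not $K^{-1}$) and the flat decoupling penalty $K^{(|\vec\alpha|-\frac{n-1}{2})(1-\frac2p)}$, the net exponent of $K$ is $(\tfrac12-\tfrac1p)\big(|\vec\alpha|-\tfrac{n}{2}\big)$, which is strictly negative for $|\vec\alpha|<\tfrac n2$ and any $p>2$, and the induction closes by taking $K$ large. (The endpoint $|\vec\alpha|=\tfrac n2$ is disposed of separately by H\"older.) The essential ingredient you are missing is precisely this lower-dimensional canonical decoupling step: without it, there is no mechanism to beat the slab loss, and the ``technical bookkeeping'' you defer is not the heart of the matter -- it works out cleanly in the paper exactly because the $K$-exponents from Bourgain--Demeter, rescaling, and flat decoupling are each computed precisely rather than estimated as $K^{O(1)}$.
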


Let us prove Proposition \ref{0428.prop22}. The proof is a modification of the broad-narrow analysis in \cite{MR2860188}. Since this argument is standard nowadays, we only give a sketch of the proof. Refer to page 139--150 of \cite{MR3971577} for more details.
\\

We introduce some notations. When $\vec{\alpha}=(\frac12, \ldots,\frac12)$, we sometimes use $\Theta(R^{-1})$ for the set $\Gamma_{\vec{\alpha}}(R^{-1})$. Note that this notation is defined for arbitrary number $R>0$.
\\

In the proof of Proposition \ref{0428.prop22}, we may assume that $|\vec{\alpha}|<\frac{n}{2}$. The case $|\vec{\alpha}|=\frac{n}{2}$ follows from H\"{o}lder's inequality. To prove Proposition \ref{0428.prop22}, we use an induction on $R$. Fix a sufficiently large number  $R_0$. We may assume that $\eqref{0526.14}$ is true for $R \leq R_0$ by taking $C_{p,\epsilon}$ sufficiently large (in particular, we assume that $C_{\epsilon} \geq 2CC_{cK^{-(n-1)},p,\epsilon}K^{100n}$ where $K$ will be determined later). By using this as a base of the induction, we may assume that $\eqref{0526.14}$ is true for $\leq R/2$, and it suffices to show \eqref{0526.14} for $\leq R$.
\\

Let us do the broad-narrow anlaysis.
Introduce a significantly large number $K$, which will be determined later. Fix a ball $B_K \subset \R^n$ of the radius $K$. Consider the set
\begin{equation}\label{0524.24}
    \mathcal{C}:= \{ \tau \in \Theta(K^{-1}) : \|F\|_{L^p(B_K)} \leq K^{10n}\|\cP_{\tau}F\|_{L^p(B_K)} \}.
\end{equation}
Recall that the dimension of $\tau \in \Theta(K^{-1})$ is $\sim K^{-\frac12} \times \cdots \times K^{-\frac12} \times K^{-1}$.
There are two scenarios.

\subsubsection*{Case 1. Broad case}

Suppose that there exist sets $\tau_1,\ldots, \tau_{n} \in \mathcal{C}$ such that $\tau$'s are $cK^{-(n-1)}$-transverse for some small number $c$ independent of $K$. Then we have
\begin{equation}\label{0621.25}
    \|F\|_{L^p(B_K)} \leq K^{10n} 
 \avprod_{i=1}^n \|\cP_{\tau_i}F\|_{L^p(B_K)}.
\end{equation}
Let $c(B_K)$ be the center of the ball $B_K$. Note that $\mathcal{P}_{\tau_i}F$ is essentially constant on $B(x_B+v,2)$. For each $i$, choose $v_i$ so that $\|\mathcal{P}_{\tau_i}F\|_{L^{\infty}(B_K)}$ is attained in $B(x_B+v_i,2)$. Define $F_{v_i}$ by
\begin{equation}
    \widehat{F_{v_i}}(\xi):=\widehat{F}(\xi)e(v_i \cdot \xi).
\end{equation}
Then we have
\begin{equation}
    \|\mathcal{P}_{\tau_i}F\|_{L^p(B_K)} \lesssim K^{O(1)}\|\mathcal{P}_{\tau_i}F_{v_i}\|_{L^p(B(x_B,2))}.
\end{equation}
Hence, by the essentially constant property together with \eqref{0621.25}, we obtain 
\begin{equation}
\|F\|_{L^p(B_K)}
     \lesssim K^{50n} 
 \Big\|\avprod_{i=1}^n \cP_{\tau_i}F_{v_i}\Big\|_{L^p(w_{B_{K}})}.
\end{equation}
Here, $w_{B_K}$ is a weight function adapted to the ball $B_K$ (see \eqref{0517.16}).

\subsubsection*{Case 2. Narrow case } Suppose that we are not in the broad case. Then one can see that there must exist a rectangular box $L$ with dimension $\sim K^{-\frac12} \times 1 \times \cdots \times 1$ such that $\bigcup_{\tau \in \mathcal{C}} \tau \subset L$.
For simplicity, assume that 
$L=[0,K^{-\frac12}] \times [0,1]^{n-1}$.
Since $\tau$'s not belonging to $\mathcal{C}$ are essentially an error, we have
\begin{equation}
    \|F\|_{L^p(B_K)} \lesssim \Big\| \sum_{\tau \subset L } \cP_{\tau}F \Big\|_{L^p(B_K)}.
\end{equation}
Note that the Fourier support of $ \sum_{\tau \subset L } \cP_{\tau}F$ is contained in $\mathbb{R} \times \N_{\bP^{n-2}}(CK^{-1})$.
We next apply a decoupling inequality for the paraboloid $\bP^{n-2}$ (Theorem 1.1 of \cite{MR3374964}), and obtain
\begin{equation}
    \|F\|_{L^p({B_K})} \leq C_{dec,\epsilon} K^{\frac{n-2}{2}(\frac12-\frac1p)+\epsilon} \Big( \sum_{\tau \in \Theta(K^{-1}): \tau \subset L  } \|  \cP_{\tau}F \|_{L^p(w_{B_K})}^p \Big)^{\frac1p}.
\end{equation}
Here $w_{B_K}$ is a weight function adapted to the ball $B_K$ (see \eqref{0517.16}).
\\

So we have the estimate
\begin{equation}
\begin{split}
    \|F\|_{L^p(B_K)} &\leq C K^{50n}
    \Big\|\avprod_{i=1}^n \cP_{\tau_i}F\Big\|_{L^p(B_{K})}
    \\& + C_{dec,\epsilon}
    K^{\frac{n-2}{2}(\frac12-\frac1p)+\epsilon} \Big( \sum_{\tau \in \Theta(K^{-1}): \tau \subset L  } \|  \cP_{\tau}F \|_{L^p(w_{B_K})}^p \Big)^{\frac1p}
\end{split}
\end{equation}
for $cK^{-(n-1)}$-transverse sets $\tau_i$ that depend on a choice of the ball $B_K$. Summing over $B_K$ gives
\begin{equation}
\begin{split}
    \|F\|_{L^p(\R^n)} &\leq CK^{100n}
    \Big\|\avprod_{i=1}^n \cP_{\tau_i}F\Big\|_{L^p(\R^n)}
    \\& + 
    C_{dec,\epsilon}K^{\frac{n-2}{2}(\frac12-\frac1p)+\epsilon} \Big( \sum_{\tau \in \Theta(K^{-1})  } \|  \cP_{\tau}F \|_{L^p(\R^n)}^p \Big)^{\frac1p}
\end{split}
\end{equation}
for some $cK^{-(n-1)}$-transverse sets $\tau_i$. We apply Theorem \ref{0428.thm21} to the first term on the right hand side, and obtain
\begin{equation}
\begin{split}
    \|F\|_{L^p(\R^n)} &\leq CC_{cK^{-(n-1)},\epsilon}K^{100n}
     R^{|\vec{\alpha}|(\frac12-\frac1p)+\epsilon}  \Big( \sum_{\gamma \in \Gamma_{\vec{\alpha}}(R^{-1}) }\| \mathcal{P}_{\gamma}F \|_{L^p(\R^n)}^p \Big)^{\frac1p}
    \\& + 
    C_{dec,\epsilon}K^{\frac{n-2}{2}(\frac12-\frac1p)+\epsilon} \Big( \sum_{\tau \in \Theta(K^{-1})} \|  \cP_{\tau}F \|_{L^p(\R^n)}^p \Big)^{\frac1p}.
\end{split}
\end{equation}
Recall that $C_{\epsilon} \geq 2CC_{cK^{-(n-1)},p,\epsilon}K^{100n}$. So the first term is already of the desired form. In order to close the induction, it suffices to prove
\begin{equation}\label{0428.0211}
    \begin{split}
        C_{dec,\epsilon}&K^{\frac{n-2}{2}(\frac12-\frac1p)+\epsilon} \Big( \sum_{\tau \in \Theta(K^{-1})} \|  \cP_{\tau}F \|_{L^p(\R^n)}^p \Big)^{\frac1p}
        \\&
        \leq \frac{1}{2}C_{\epsilon}R^{|\vec{\alpha}|(\frac12-\frac1p)+\epsilon}  \Big( \sum_{\gamma \in \Gamma_{\vec{\alpha}}(R^{-1}) }\| \mathcal{P}_{\gamma}F \|_{L^p(\R^n)}^p \Big)^{\frac1p}.
    \end{split}
\end{equation}
We will apply parabolic rescaling and induction hypothesis (see the paragraph above \eqref{0524.24}) to $\cP_{\tau}F$. This argument is standard (for example, see Proposition 4.1 of \cite{MR3374964}). Let us give some details. For simplicity, let us assume that 
\begin{equation}
\tau = [0,K^{-\frac12}] \times \cdots \times [0,K^{-\frac12}] \times [0,K^{-1}].    
\end{equation}
Define the linear transformation $L$ mapping $\tau$ to $[0,1]^n$, and the function $G$ so that
\begin{equation}
    \widehat{G}(\xi):=\widehat{\cP_{\tau}F}(L^{-1}\xi).
\end{equation}
Note that the Fourier support of $G$ is contained in $\N_{\bP^{n-1}}(KR^{-1})$. Applying the induction hypothesis \eqref{0526.14} to $G$ with the parameter $KR^{-1}$, the frequency is decomposed into  boxes with dimension $(K^{-1}R)^{-\alpha_1} \times \cdots \times (K^{-1}R)^{-\alpha_{n-1}} \times (K^{-1}R)^{-1}$.   Rescaling back, we obtain
\begin{equation}\label{0428.0212}
    \|\cP_{\tau}F\|_{L^p} \leq C_{\epsilon}K^{-|\vec{\alpha}|(\frac12-\frac1p)-\epsilon}
    R^{|\vec{\alpha}|(\frac12-\frac1p)+\epsilon}  \Big( \sum_{\beta  }\| \mathcal{P}_{\beta}F \|_{L^p(\R^n)}^p \Big)^{\frac1p},
\end{equation}
where $\beta$ has dimension 
\begin{equation}
    K^{-\frac12}(K^{-1}R)^{-\alpha_1} \times \cdots \times K^{-\frac12}(K^{-1}R)^{-\alpha_{n-1}} \times R^{-1}.
\end{equation}
Recall that $\gamma \in \Gamma_{\vec{\alpha}}(R^{-1})$ has dimension
\begin{equation}
    R^{-\alpha_1} \times \cdots \times R^{-\alpha_{n-1}} \times R^{-1}.
\end{equation}
Since $\gamma$'s are disjoint, at most $|\beta|/|\gamma| \sim K^{|\vec{\alpha}|-\frac{(n-1)}{2}}$ many $\gamma$ can be contained in the given $\beta$. By a flat decoupling (for example, see Proposition 2.4 of \cite{MR4153908}), we obtain
\begin{equation}\label{0428.0215}
    \|\cP_{\beta}F\|_{L^p(\R^n)}\leq CK^{(|\vec{\alpha}|-\frac{(n-1)}{2})(1-\frac2p)}
    \Big( \sum_{\gamma \in \Gamma_{\vec{\alpha}}(R^{-1}): \gamma \subset \beta  }\| \mathcal{P}_{\gamma}F \|_{L^p(\R^n)}^p \Big)^{\frac1p}.
\end{equation}
Recall that we are trying to prove \eqref{0428.0211}. By combining \eqref{0428.0212} and \eqref{0428.0215}, the term on the left hand side of \eqref{0428.0211} is bounded by
\begin{equation*}
    \begin{split}
        CC_{dec,\epsilon}C_{\epsilon}K^{\frac{n-2}{2}(\frac12-\frac1p)+\epsilon} 
        K^{|\vec{\alpha}|(\frac12-\frac{1}{p})}
        K^{-\frac{(n-1)}{2}(1-\frac2p)}
    R^{|\vec{\alpha}|(\frac12-\frac1p)+\epsilon}\Big( \sum_{\gamma \in \Gamma_{\vec{\alpha}}(R^{-1}) }\| \mathcal{P}_{\gamma}F \|_{L^p(\R^n)}^p \Big)^{\frac1p}.
    \end{split}
\end{equation*}
It suffices to prove
\begin{equation}
    CC_{dec,\epsilon}
    K^{(\frac{n-2}{2}+|\vec{\alpha}|)(\frac12-\frac1p)} 
        K^{-\frac{(n-1)}{2}(1-\frac2p)} \leq \frac12.
\end{equation}
The exponent of $K$ is smaller than zero provided that $|\vec{a}|<\frac{n}{2}$ for any $p > 2$. So we simply take $K$ sufficiently large so that the above inequality is true. This finishes the proof.

\section{Proof of the multilinear decoupling\label{mainarg}}

In this section, we prove the multilinear small cap decoupling (Theorem \ref{0428.thm21}). We follow a two-step decoupling argument in \cite{MR4153908}. There are two ingredients: a refined flat decoupling (Lemma \ref{0429.lem31}) and a refined decoupling (Lemma \ref{0429.ref}).
\medskip

Let us first introduce the refined flat decoupling. Consider the wave packet decomposition. Let $B$ be a rectangular box in $\R^n$. Then we have
\begin{equation}\label{0622.31}
    \cP_BF(x)= \sum_{T \in \mathbb{T}_B }w_T W_T(x).
\end{equation}
Here, $W_T$ is an $L^{\infty}$ normalized smooth approximation of $\chi_T$ with the Fourier support inside a slight enlargement of $B$. The collection $\mathbb{T}_B$ represents a tiling of $\R^n$ with boxes $T$ dual to $2B$. This decomposition is standard, so we omit the details. We refer to Page 1001 of \cite{MR4153908} for the construction and the proof.

\begin{lemma}[Refined flat decoupling, Corollary 4.2 of \cite{MR4153908}]\label{0429.lem31}
Let $B$ be an rectangular box in $\R^n$, and let $B_1,\ldots,B_L$ be a partition of $B$ into rectangular boxes which are translates of each other. Let $\Tau$ be a tiling of $\R^n$ with rectangular boxes $\tau$ dual to $B_i$.

Let $\T_B' \subset \T_B$ be such that $|w_T| \sim w$ for $T \in \T_B'$ and such that each $\tau \in \Tau$ contains either $\sim N$ tubes $T$ or no tubes at all. Then for $2 \leq p < \infty$
\begin{equation}
    \Big\| \sum_{T \in \T_B' }w_TW_T \Big\|_{L^p(\R^n)} \leq C_{p,\epsilon}N^{\epsilon}(\frac{L^2}{N})^{\frac12-\frac1p} \Big(\sum_{i=1}^L \|\cP_{B_i}F\|_{L^p(\R^n)}^p \Big)^{\frac1p}.
\end{equation}

\end{lemma}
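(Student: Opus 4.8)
The plan is to prove the refined flat decoupling by combining a trivial (flat) decoupling step with the near-orthogonality of wave packets living over disjoint boxes in the partition. First I would record the two extreme cases. At $p=2$, the right-hand side should essentially be the square function estimate: since the functions $\cP_{B_i}F$ have Fourier supports that are finitely overlapping (the $B_i$ partition $B$), Plancherel gives $\|\sum_T w_T W_T\|_{L^2}^2 \lesssim \sum_i \|\cP_{B_i}F\|_{L^2}^2$, and with the stated hypotheses this already matches the claimed bound with exponent $(\frac12-\frac1p)=0$ up to the $N^\epsilon$ factor — so at $p=2$ there is nothing to prove beyond Plancherel. At $p=\infty$ (or rather $p$ very large), the bound degenerates to an $\ell^1$-type estimate which follows from the triangle inequality together with counting how many boxes $B_i$ are "active," so the content is genuinely in interpolating between these, but done carefully enough to get the sharp power $(L^2/N)^{\frac12-\frac1p}$ rather than $L^{\,\text{something}}$.

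The cleaner route, and the one I would actually carry out, is a direct argument exploiting the tiling $\Tau$ dual to the $B_i$. Group the tubes $T\in\T_B'$ according to which $\tau\in\Tau$ contains them; call these groups $\{T\}_{\tau}$, and note each nonempty group has $\sim N$ tubes and that there are $\sim L$ of them packed inside any single dual box (since $\T_B'$ covers a region of volume comparable to $L/N$ times $|\tau|$... more precisely, since the $B_i$ partition $B$ into $L$ pieces, a box dual to $B_i$ has $L$ times the volume of a box dual to $B$). On a fixed $\tau$, the function $g_\tau:=\sum_{T\subset\tau} w_T W_T$ is (up to rapidly decaying tails) supported on $\tau$ and has $L^\infty$-norm $\sim wN$ there because the $\sim N$ tubes through $\tau$ are disjoint and carry coefficients of size $\sim w$; hence $\|g_\tau\|_{L^p(\R^n)}^p \sim (wN)^p |\tau| = (wN)^p \, L |B|^{-1}$ up to constants (using $|\tau|\sim L/|B|$). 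Meanwhile the $g_\tau$ have pairwise disjoint (essential) spatial supports, so $\|\sum_\tau g_\tau\|_{L^p}^p \sim \sum_\tau \|g_\tau\|_{L^p}^p$. On the other side, for each $i$, $\cP_{B_i}F = \sum_{T\in\T_{B_i}} w_T W_T$, and among the $\T_B'$-tubes the ones contributing to $B_i$-frequencies are those aligned with boxes dual to $B_i$; one estimates $\|\cP_{B_i}F\|_{L^p}$ from below by the local constancy on its own dual boxes. Matching the two counts — roughly $(L^2/N)^{p/2-1}$ is exactly the ratio of $(\#\tau\text{'s})\cdot(\text{mass per }\tau)$ to $(\#B_i)\cdot(\text{mass per }B_i\text{-tube block})$ raised to the appropriate power — yields the claimed inequality. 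This is essentially the proof of Corollary 4.2 in \cite{MR4153908}, specialized; the bookkeeping with the weight $w$, the factor $N$, and the volume ratio $|\tau_{B_i}|/|\tau_B| = L$ is what produces the sharp exponent.

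Concretely the steps are: (1) perform the wave packet decomposition \eqref{0622.31} and restrict to $\T_B'$; (2) partition $\T_B'$ by the dual boxes $\tau\in\Tau$ and use disjointness of spatial supports to write $\|\sum_T w_T W_T\|_{L^p}^p \sim \sum_\tau \|\sum_{T\subset\tau} w_T W_T\|_{L^p}^p$; (3) on each $\tau$, use that the $\sim N$ tubes are pairwise disjoint with coefficients $\sim w$ to compute the $L^p$ mass as $\sim (wN)^p |\tau|$; (4) count: there are at most $\sim L^2/N$ such occupied boxes $\tau$ per... actually the total number of occupied $\tau$ is $|\T_B'|/N$, and $|\T_B'|\le$ (number of $T$ in $B$) which relates to the $B_i$ data; (5) on the lower-bound side, estimate each $\|\cP_{B_i}F\|_{L^p}$ from below by local constancy on boxes dual to $B_i$, absorbing the $N^\epsilon$ from the rapidly decaying tails of the $W_T$. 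The main obstacle — and the place where care is needed — is step (4)–(5): correctly relating the number of active tubes in $\T_B'$ and their distribution among the $\tau$'s to the sum $\sum_i\|\cP_{B_i}F\|_{L^p}^p$, because the hypothesis only controls the \emph{local} structure ("$\sim N$ tubes or none" per $\tau$) and not the \emph{global} count, so one must argue that the worst case is exactly when the occupied $\tau$'s are as spread out as possible, which is what makes $(L^2/N)^{1/2-1/p}$ sharp. Handling the rapidly decaying tails rigorously (replacing genuine disjointness of supports by an $\ell^p$-almost-orthogonality with the weights $w_{B_K}$) is routine but should be flagged. Since this is verbatim a cited result (Corollary 4.2 of \cite{MR4153908}), I would in the paper simply refer to that proof; the sketch above is the argument one would reconstruct if needed.
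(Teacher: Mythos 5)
The paper does not prove this lemma; it cites Corollary 4.2 of \cite{MR4153908} directly, so your closing remark that one would ``simply refer to that proof'' is exactly what the paper does. Still, since you offered a reconstruction, it is worth flagging a concrete error in it. In step (3) you assert that on an occupied $\tau$ the function $g_\tau=\sum_{T\subset\tau}w_T W_T$ satisfies $\|g_\tau\|_{L^\infty}\sim wN$ ``because the $\sim N$ tubes through $\tau$ are disjoint.'' The implication runs the other way: because $\T_B$ is a \emph{tiling} of $\R^n$ by translates of a single box $T$, any point $x$ lies in at most one of the $\sim N$ tubes, and the rapidly decaying tails of the others contribute only $O(w)$, so $\|g_\tau\|_{L^\infty}\sim w$, not $wN$. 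Accordingly $\|g_\tau\|_{L^p}^p\sim w^p\cdot N\cdot|T|$ (a function of pointwise size $\sim w$ on a support of measure $\sim N|T|$), not $(wN)^p|\tau|$. Since $|\tau|\sim L|T|$, your figure $(wN)^p|\tau|=w^pN^pL|T|$ overshoots by a factor of $N^{p-1}L$, which is not harmless and would not reproduce the exponent $(L^2/N)^{\frac12-\frac1p}$ at the end.

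There is also a structural gap in steps (4)--(5). You estimate $\sum_i\|\cP_{B_i}F\|_{L^p}^p$ by assuming the occupied $\tau$'s are ``as spread out as possible'' and comparing typical sizes of the two sides; but the lemma is a one-sided inequality that must hold for \emph{every} admissible configuration of coefficients, so a matching of generic magnitudes is not a proof. What is actually needed on each $\tau$ is a genuine local decoupling of $g_\tau$ into the $\cP_{B_i}$-pieces, using the $L^2$-orthogonality of those pieces on $\tau$ (this is where $\|g_\tau\|_{L^2(\tau)}^2\sim w^2N|T|$, and hence the $N$-gain, enters) together with the local constancy of each $\cP_{B_i}F$ at $\tau$-scale; one then sums $p$-th powers over the disjoint boxes $\tau\in\Tau$. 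Your scaffolding (tile by $\tau$, exploit spatial disjointness of the $g_\tau$, count) is the right shape, but the central $L^\infty$ computation is wrong and the passage to the $\cP_{B_i}F$ side is left as heuristic bookkeeping rather than an inequality.
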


Suppose that $\hat{F}$ is supported on $\N_{\bP^{n-1}}(R^{-1})$.
Consider the wave packet decomposition; apply \eqref{0622.31} to the function $\mathcal{P}_{\theta}F$.
\begin{equation}
    F=\sum_{\theta \in \Theta(R^{-1}) }\cP_{\theta}F=\sum_{T \in \T_R(F) } w_TW_T.
\end{equation}
The family $\T_R(F)$ contains the tubes corresponding to all boxes $\theta \in \Theta(R^{-1})$. Note that $T \in \T_R(F)$ has dimension
\begin{equation}
    R^{\frac12} \times \cdots \times R^{\frac12} \times R.
\end{equation}
We also use the notation
\begin{equation}
    F_T:=w_TW_T.
\end{equation}
Here is the refined decoupling formulated in \cite{MR4153908} (see Theorem 5.6 therein or Theorem 4.2 of \cite{MR4055179}).

\begin{lemma}[Refined decoupling]\label{0429.ref}
    Let $\mathcal{Q}$ be a collection of pairwise disjoint squares $q$ in $\R^n$, with side length $R^{\frac12}$. Assume that each $q$ intersects at most $M$ fat tubes $R^{\delta}T$ with $T\in \T_R(F)$ for some $M \geq 1$ and $\delta>0$.

    Then for $2 \leq p \leq 
\frac{2(n+1)}{n-1} $ and $\epsilon>0$
    \begin{equation}
        \|F\|_{L^q(\cup_{q \in \mathcal{Q} }q )} \leq C_{p,\delta,\epsilon}R^{\epsilon} M^{\frac12-\frac1p} \Big( \sum_{T \in \T_R(F) }\|F_T\|_{L^p(\R^n)}^p \Big)^{\frac1p}.
    \end{equation}
\end{lemma}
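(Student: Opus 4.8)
The plan is to deduce this from the Bourgain--Demeter $\ell^2$ decoupling theorem for $\bP^{n-1}$, which holds in precisely the stated range $2\le p\le\frac{2(n+1)}{n-1}$, by an induction on scales that exploits the wave packet structure of $F$. It suffices to prove the estimate at scale $R$ assuming it holds at scale $R^{1/2}$: the base case $R=O(1)$ is the triangle inequality (recall $M\ge 1$); the intermediate scales are $R,R^{1/2},R^{1/4},\dots$, so there are $\sim\log\log R$ of them and the $R^{\epsilon}$ losses multiply to $R^{O(\epsilon)}$; and when $p=2$ the statement is the almost orthogonality of the $F_T$ (with $M^{0}=1$), so we may assume $p>2$.

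Fix $q\in\mathcal{Q}$. Each tube $T\in\T_\theta$ has $n-1$ short sides of length $R^{1/2}$ and one long side of length $R$, so only $O_n(1)$ members of a given family $\T_\theta$ can meet $q$. Let $\Theta(q)$ be the set of caps $\theta$ such that some $T\in\T_\theta$ has $R^{\delta}T\cap q\ne\emptyset$; then every $F_T$ with $T\in\T_\theta$ and $\theta\notin\Theta(q)$ is negligible on $q$ by rapid decay, and the hypothesis gives $\#\Theta(q)\le M$, so $F\approx\sum_{\theta\in\Theta(q)}\cP_\theta F$ on $q$. Since $q$ has side length $R^{1/2}=(R^{1/4})^{2}$, Bourgain--Demeter decoupling partitions $\N_{\bP^{n-1}}(R^{-1})$ into pieces $\tau$ lying over $R^{-1/4}$-cubes and gives
\begin{equation*}
\|F\|_{L^p(q)}\lesssim_{\epsilon} R^{\epsilon}\Bigl(\sum_{\tau:\ \tau\cap\Theta(q)\ne\emptyset}\|\cP_\tau F\|_{L^p(w_q)}^2\Bigr)^{\frac12}.
\end{equation*}
Set $M_\tau(q):=\#\{\theta\subset\tau:\theta\in\Theta(q)\}$, so that $\sum_\tau M_\tau(q)\le M$. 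The parabolic rescaling adapted to $\tau$ carries the Fourier support of $\cP_\tau F$ onto $\N_{\bP^{n-1}}(R^{-1/2})$, carries $q$ into (essentially) a square of side $(R^{1/2})^{1/2}$, and carries the families $\T_\theta$ with $\theta\subset\tau$ onto the tube families at scale $R^{1/2}$, in such a way that the rescaled square meets $\lesssim M_\tau(q)$ of the relevant fat tubes. Applying the induction hypothesis at scale $R^{1/2}$ to $\cP_\tau F$ and undoing the rescaling,
\begin{equation*}
\|\cP_\tau F\|_{L^p(w_q)}\lesssim_{\epsilon} R^{\epsilon}\,M_\tau(q)^{\frac12-\frac1p}\Bigl(\sum_{\theta\subset\tau,\ \theta\in\Theta(q)}\|\cP_\theta F\|_{L^p(w_q)}^p\Bigr)^{\frac1p}.
\end{equation*}

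Substituting into the decoupling inequality and applying H\"older in $\tau$ (with exponents $\frac{p}{p-2}$ and $\frac p2$, using $\sum_\tau M_\tau(q)\le M$) turns the $\ell^2$-sum over $\tau$ into $M^{\frac12-\frac1p}$ times an $\ell^p$-sum over $\theta$:
\begin{equation*}
\|F\|_{L^p(q)}\lesssim_{\epsilon} R^{\epsilon}\,M^{\frac12-\frac1p}\Bigl(\sum_{\theta\in\Theta(q)}\|\cP_\theta F\|_{L^p(w_q)}^p\Bigr)^{\frac1p}.
\end{equation*}
Raising to the $p$-th power and summing over $q\in\mathcal{Q}$, then using $\|\cP_\theta F\|_{L^p(w_q)}^p\lesssim\sum_{T\in\T_\theta}\|F_T\|_{L^p(w_q)}^p$ (the tubes inside a fixed $\theta$ are essentially disjoint) and $\sum_{q\in\mathcal{Q}}w_q\lesssim 1$ (the $q$ are pairwise disjoint $R^{1/2}$-squares), one collapses the right-hand side to $M^{\frac p2-1}\sum_{T\in\T_R(F)}\|F_T\|_{L^p(\R^n)}^p$. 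Taking $p$-th roots closes the induction.

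The main obstacle is the rescaling step in the second paragraph: one must check that the parabolic map attached to each $\tau$ genuinely transports the collection $\mathcal{Q}$, the tube families, the weights $w_q$, and --- above all --- the ``at most $M$ fat tubes per square'' hypothesis, to a legitimate instance of the lemma at scale $R^{1/2}$; the anisotropy of the map and the bookkeeping of the $R^{\delta}$-fattening (whose accumulated cost over the recursion is where the dependence of $C_{p,\delta,\epsilon}$ on $\delta$ enters) are what require care, and are carried out in \cite{MR4153908,MR4055179}. It is precisely the $\ell^2$ form of Bourgain--Demeter decoupling, together with the H\"older step against $\sum_\tau M_\tau(q)\le M$, that upgrades the usual decoupling loss into the gain $M^{\frac12-\frac1p}$, and the sharp range $p\le\frac{2(n+1)}{n-1}$ of $\ell^2$ decoupling is exactly what dictates the range of $p$ in the lemma.
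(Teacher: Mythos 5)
The paper does not actually prove this lemma; it is cited directly as Theorem 5.6 of \cite{MR4153908} and Theorem 4.2 of \cite{MR4055179}, so there is no in-paper proof to compare your attempt against. That said, your sketch is a faithful reconstruction of the standard argument in those references: multi-scale induction, Bourgain--Demeter $\ell^2$ decoupling applied at the intermediate scale $R^{-1/4}$ (valid on an $R^{1/2}$-ball since the Fourier support sits inside $\mathcal{N}_{\mathbb{P}^{n-1}}(R^{-1/2})$), parabolic rescaling to close the induction, and H\"older against $\sum_\tau M_\tau(q)\le M$ to convert the $\ell^2$-in-$\tau$ sum into $M^{\frac12-\frac1p}$ times an $\ell^p$-in-$\theta$ sum. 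Two small imprecisions are worth correcting if you write this out. First, the parabolic rescaling attached to $\tau$ does not carry $q$ to an $(R^{1/2})^{1/2}$-square: it compresses $q$ by $R^{-1/4}$ in the $\tau$-tangential directions but by $R^{-1/2}$ in the $\tau$-normal direction, producing a slab of dimensions roughly $R^{1/4}\times\cdots\times R^{1/4}\times 1$. This slab is, however, contained in an $R^{1/4}$-cube, and the rescaled tubes through the slab are exactly the images of $\T_\tau(q)$, so one may feed the inductive hypothesis a cover $\mathcal{Q}'$ of the slab by $O(1)$ many $R^{1/4}$-cubes each meeting $\le M_\tau(q)$ fat tubes; the conclusion you wrote is correct but the geometry should be stated as above, and the usual weight/Schwartz-tail bookkeeping is needed to pass between $\|\cdot\|_{L^p(w_q)}$ and the $L^p$ norm on the image. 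Second, the loss accounting is cleaner if you note that the Bourgain--Demeter loss at the $k$-th stage (scale $R^{1/2^k}$) is $(R^{1/2^k})^{\epsilon}$, so the product over all stages is $R^{2\epsilon}$; the phrase ``$R^{\epsilon}$ losses multiplied over $\sim\log\log R$ stages'' would literally give $R^{\epsilon\log\log R}$, which is not of the claimed form. Neither point changes the substance of the argument.
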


\subsection{Reduction to incidence estimate}

Let us prove Theorem \ref{0428.thm21}. Recall the inequality \eqref{0429.23}.
\begin{equation}\label{0429.37}
    \Big\| \avprod_{i=1}^n F_i \Big\|_{L^p(\R^n)} \leq C_{A,p,\epsilon} R^{|\vec{\alpha}|(\frac12-\frac1p)+\epsilon} \avprod_{i=1}^n \Big( \sum_{\gamma \in \Gamma_{\vec{\alpha}}(R^{-1}) }\| \mathcal{P}_{\gamma}F_i \|_{L^p(\R^n)}^p \Big)^{\frac1p}.
\end{equation}
This inequality is trivial when $p=2$ by the $L^2$-orthogonality.
It suffices to prove for $p=p_c=2+\frac{2}{|\vec{\alpha}|}$. The general case follows from the interpolation between $p_c$ and $p=2$.
We do the wave packet decomposition to each function $F_i$. We can take $\T_i \subset \T_R(F_i)$ and $\Theta_i(R^{-1}) \subset \Theta(R^{-1})$ so that
\begin{equation}
    F_i=\sum_{\theta \in \Theta_i(R^{-1}) }\cP_{\theta}F =\sum_{T \in \T_i} w_T W_T
\end{equation}
and the directions of $T_1 \in \T_1, \ldots, T_n \in \T_n$ are $A$-transverse in the sense that
\begin{equation}
    n(T_1) \wedge \cdots \wedge n(T_n) \geq A.
\end{equation}
Here $n(T_i)$ is the unit vector parallel to the long direction of the tube $T_i$. By pigeonholing, we may assume that 
\begin{equation}
    |w_{T}| \leq |w_{T'}| \leq 2|w_{T}|
\end{equation}
for all $T,T' \in \T_i$ for each $i$. Since both sides of \eqref{0429.37} are symmetric, we may assume that $|w_{T}|$ is comparable to one for all $T \in \T_i$.

The tube $T \in \T_i$ has dimension $R^{\frac12} \times \cdots \times R^{\frac12} \times R$. For $\gamma \in \Gamma_{\vec{\alpha}}(R^{-1})$, the function $\cP_{\gamma}F_i$ is essentially constant on a slab with dimension
\begin{equation}
    R^{\alpha_1} \times \cdots \times R^{\alpha_{n-1}} \times R
\end{equation}
dual to $\gamma$. We will call such slab $(R^{\vec{\alpha}},R)$-slab $\tau$. For each $\theta \in \Theta_i(R^{-1})$, we take $\gamma \in \Gamma_{\vec{\alpha}}(R^{-1})$ contained in $\theta$, and tile $\R^n$ with $(R^{\vec{\alpha}},R)$-slabs $\tau$ associated with $\gamma$. By pigeonholing, we may assume that the number of tubes $T$ associated with $\theta \in \Theta_i(R^{-1})$ and contained in $\tau$ is either approximately $N_i$ or zero for some number $N_i$. By abusing the notation, we still use the same notation $F_i$ for the function obtained after the pigeonholing. 
Note that the number of $\gamma \in \Gamma_{\vec{\alpha}}(R^{-1})$ contained in the given $\theta \in \Theta_i(R^{-1})$ is comparable to $R^{|\vec{\alpha}|-\frac{n-1}{2}}$.
As a corollary of the refined flat decoupling (Lemma \ref{0429.lem31}), we obtain

\begin{corollary}[cf. Corollary 5.7 of \cite{MR4153908}]\label{0429.cor57} For $2 \leq p \leq \infty$ and any $\epsilon>0$
\begin{equation}
     \|\cP_{\theta}F_i\|_{L^p(\R^n)} \leq C_{p,\epsilon}N_i^{\epsilon} \big(\frac{R^{2(|\vec{\alpha}|-\frac{n-1}{2})}}{N_i} \big)^{\frac12-\frac1p}\Big( \sum_{\gamma \in \Gamma_{\vec{\alpha}}(R^{-1}): \gamma \subset \theta } \|\cP_{\gamma}F_i\|_{L^p(\R^n)}^p \Big)^{\frac1p}.
\end{equation}

\end{corollary}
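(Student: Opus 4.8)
The plan is to read off Corollary~\ref{0429.cor57} directly from the refined flat decoupling, Lemma~\ref{0429.lem31}, after matching up the objects. I will apply that lemma with $B=\theta$, with $B_1,\dots,B_L$ the boxes $\gamma\in\Gamma_{\vec{\alpha}}(R^{-1})$ contained in $\theta$, and with $\Tau$ the tiling of $\R^n$ by the $(R^{\vec{\alpha}},R)$-slabs $\tau$ dual to $\gamma$ --- these are precisely the slabs on which $\cP_{\gamma}F_i$ is essentially constant, of dimensions $R^{\alpha_1}\times\cdots\times R^{\alpha_{n-1}}\times R$. The parameter $L$ is the number of $\gamma\subset\theta$, which (as already noted just above the statement) equals $\prod_{j=1}^{n-1}R^{\alpha_j-1/2}=R^{|\vec{\alpha}|-\frac{n-1}{2}}$, since in the $j$-th coordinate direction $\theta$ has side $R^{-1/2}$ and $\gamma$ has side $R^{-\alpha_j}$; hence $L^2=R^{2(|\vec{\alpha}|-\frac{n-1}{2})}$, which is exactly the numerator in the claimed bound.

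It then remains only to verify the wave-packet hypotheses of Lemma~\ref{0429.lem31}. By the wave packet decomposition~\eqref{0622.31} applied to $\cP_{\theta}F_i$, one has $\cP_{\theta}F_i=\sum_T w_TW_T$ with $T$ ranging over the tubes of $\T_i$ associated with $\theta$; each such $T$ has dimensions $R^{1/2}\times\cdots\times R^{1/2}\times R$ and therefore sits inside a slab $\tau\in\Tau$ (which is larger in every coordinate direction because $\alpha_j\ge\frac12$). After the pigeonholing carried out just before the statement of the corollary, $|w_T|\sim 1$ for all these $T$ and every slab $\tau\in\Tau$ contains either $\sim N_i$ of them or none at all. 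Thus Lemma~\ref{0429.lem31} applies with $\T_B'=\{T\in\T_i:\ T\text{ associated with }\theta\}$, $w\sim 1$ and $N=N_i$, and, noting that $\cP_{B_i}F=\cP_{\gamma}F_i$, it gives
\[
\|\cP_{\theta}F_i\|_{L^p(\R^n)}\le C_{p,\epsilon}N_i^{\epsilon}\Big(\frac{L^2}{N_i}\Big)^{\frac12-\frac1p}\Big(\sum_{\gamma\in\Gamma_{\vec{\alpha}}(R^{-1}):\,\gamma\subset\theta}\|\cP_{\gamma}F_i\|_{L^p(\R^n)}^p\Big)^{\frac1p}
\]
for $2\le p<\infty$. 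Substituting $L^2=R^{2(|\vec{\alpha}|-\frac{n-1}{2})}$ yields the corollary, and the endpoint $p=\infty$ is obtained in the same way from the endpoint of the refined flat decoupling (or simply by letting $p\to\infty$, the exponents and constant being controlled there).

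I do not anticipate a real obstacle: the analytic content lies entirely in Lemma~\ref{0429.lem31}, and everything else is bookkeeping. The points that need a moment's care are the identifications that make the lemma applicable --- that the $(R^{\vec{\alpha}},R)$-slabs form exactly the tiling dual to the boxes $\gamma$ required by the lemma, that the tubes $T$ counted in the pigeonholing are the wave packets of $\cP_{\theta}F_i$ (so that no tube is lost in writing $\cP_{\theta}F_i=\sum_T w_TW_T$), and that the $\gamma$'s inside $\theta$ partition $\theta$ into translates of a single box --- all of which follow immediately from the construction of the wave packet decomposition and the definition of $\Gamma_{\vec{\alpha}}(R^{-1})$.
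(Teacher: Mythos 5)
Your proof is correct and follows the paper's intended route: the paper asserts Corollary~\ref{0429.cor57} directly as a consequence of Lemma~\ref{0429.lem31} without writing out the details, and your argument supplies exactly the right identifications, namely $B=\theta$, $B_i=\gamma$, $\Tau$ the tiling by $(R^{\vec{\alpha}},R)$-slabs dual to $\gamma$, $L=R^{|\vec{\alpha}|-\frac{n-1}{2}}$, $N=N_i$, and $w\sim 1$ from the preceding pigeonholing, which yield the stated bound.
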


We do one more pigeonholing on spatial sides. Let $\vec{r}=(r_1,\ldots,r_n) \in \Z^{n}$. For each dyadic number $1 \leq r_i \leq 2R^{1/2}$, denote by $\mathcal{Q}_{\vec{r}}$ be the collection of all dyadic cubes $q$ with side length $R^{\frac12}$ such that whose slight enlargments (more precisely, $Cq$ for some large number $C$) intersect $\sim r_i$ many tubes $T \in \T_i$. Define
\begin{equation}
    S_{\vec{r}}:= \bigcup_{ q \in \mathcal{Q}_{\vec{r}} } q.
\end{equation}
Recall that we are trying to prove \eqref{0429.37}. By pigeonholing on the spatial sides, what we need to prove becomes
\begin{equation}\label{0429.313}
    \Big\| \avprod_{i=1}^n F_i \Big\|_{L^p(S_{\vec{r}}  )} \leq C_{A,p,\epsilon} R^{|\vec{\alpha}|(\frac12-\frac1p)+\frac{\epsilon}{2}} \avprod_{i=1}^n \Big( \sum_{\gamma \in \Gamma_{\vec{\alpha}}(R^{-1}) }\| \mathcal{P}_{\gamma}F_i \|_{L^p(\R^n)}^p \Big)^{\frac1p}.
\end{equation}
We have finished all pigeonholing arguments.

\begin{theorem}[cf. Theorem 5.8 of \cite{MR4153908}]\label{0429.thm34}
For $\frac{2n}{n-1} \leq p \leq \frac{2(n+1)}{n-1}$ and $\epsilon>0$
\begin{gather*}
    \Big\|\avprod_{i=1}^n F_i \Big\|_{L^p(S_{\vec{r}})}
    \\
    \lesssim_{\epsilon}  R^{\epsilon}
    \Big(\avprod_{i=1}^n \frac{r_i^{\frac{n}{n-1}}\# \mathcal{Q}_{\vec{r}} }{ R^{\frac12} \# \mathbb{T}_i }  \Big)^{ (n+1)(\frac1p- \frac{n-1}{2(n+1)} )}
    (\avprod_{i=1}^n r_i )^{\frac{2n}{n-1} (\frac{n-1}{2n}-\frac1p)} \avprod_{i=1}^n \Big( \sum_{\theta \in \Theta_i(R^{-1}) }\|\cP_{\theta}F_{i}\|_{L^p}^p \Big)^{\frac1p}.
\end{gather*}
\end{theorem}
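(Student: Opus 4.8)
The plan is to deduce Theorem~\ref{0429.thm34} by interpolating the two endpoint estimates $p=p_1:=\tfrac{2n}{n-1}$ and $p=p_0:=\tfrac{2(n+1)}{n-1}$ of the stated range, following the proof of Theorem~5.8 in \cite{MR4153908}. Throughout we use that the pigeonholing already performed guarantees $|w_T|\sim 1$ for every $T\in\mathbb{T}_i$, so that $\|F_{i,T}\|_{L^p}^p\sim|T|=R^{\frac{n+1}{2}}$ and hence $\sum_{T\in\mathbb{T}_i}\|F_{i,T}\|_{L^p}^p\sim R^{\frac{n+1}{2}}\,\#\mathbb{T}_i\sim\sum_{\theta\in\Theta_i(R^{-1})}\|\cP_\theta F_i\|_{L^p}^p$; this lets us pass freely between wave-packet sums and the canonical-cap sums appearing on the right of the statement. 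We also record the incidence bound $r_i\,\#\mathcal{Q}_{\vec r}\lesssim R^{1/2}\,\#\mathbb{T}_i$, which holds because every $T\in\mathbb{T}_i$ meets $\lesssim R^{1/2}$ of the $R^{1/2}$-cubes while every $q\in\mathcal{Q}_{\vec r}$ meets $\sim r_i$ fat tubes of $\mathbb{T}_i$.

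\emph{The endpoint $p=p_0$.} Here we invoke refined decoupling. For each $i$ every $q\in\mathcal{Q}_{\vec r}$ meets $\sim r_i$ fat tubes $R^\delta T$ with $T\in\mathbb{T}_i$, so Lemma~\ref{0429.ref} applied to $F_i$ (with $\mathcal{Q}=\mathcal{Q}_{\vec r}$ and $M\sim r_i$) gives
\[
\|F_i\|_{L^{p_0}(S_{\vec r})}\lesssim_\epsilon R^\epsilon\, r_i^{\frac12-\frac1{p_0}}\Big(\sum_{T\in\mathbb{T}_i}\|F_{i,T}\|_{L^{p_0}}^{p_0}\Big)^{1/p_0}.
\]
Taking the geometric average over $i$ (via H\"older), using $\tfrac12-\tfrac1{p_0}=\tfrac1{n+1}$ and the comparison above, yields Theorem~\ref{0429.thm34} at $p=p_0$: at $p_0$ the first factor on the right of the statement has exponent $0$ and the second reduces to $(\avprod_i r_i)^{1/(n+1)}$, which is exactly what refined decoupling produces.

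\emph{The endpoint $p=p_1$.} Here we use a localized form of Bennett--Carbery--Tao multilinear restriction. Split $S_{\vec r}$ into the $R^{1/2}$-cubes $q\in\mathcal{Q}_{\vec r}$; on each such $q$ every $\cP_\theta F_i$ is essentially constant, and $L^2$-orthogonality of the wave packets inside a fixed cap together with the incidence count $\#\{T\in\mathbb{T}_i:T\cap q\neq\emptyset\}\sim r_i$ gives $\int_q|F_i|^2\sim r_i|q|$. The $A$-transversality of the Fourier supports lets us apply multilinear restriction at scale $R^{1/2}$ on $q$,
\[
\int_q\avprod_{i=1}^n|F_i|^{p_1}\lesssim_{A,\epsilon}R^\epsilon\,|q|\avprod_{i=1}^n\Big(\tfrac1{|q|}\int_q|F_i|^2\Big)^{p_1/2}\lesssim_{A,\epsilon}R^\epsilon\,|q|\avprod_{i=1}^n r_i^{p_1/2},
\]
and summing over the $\#\mathcal{Q}_{\vec r}$ cubes with $|q|=R^{n/2}$ gives $\big\|\avprod_i F_i\big\|_{L^{p_1}(S_{\vec r})}\lesssim_{A,\epsilon}R^\epsilon(R^{n/2}\#\mathcal{Q}_{\vec r})^{1/p_1}\avprod_i r_i^{1/2}$. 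Distributing $\#\mathbb{T}_i\,R^{(n+1)/2}\sim\sum_\theta\|\cP_\theta F_i\|_{L^{p_1}}^{p_1}$ through the two factors on the right of the statement at $p=p_1$ (where the first has exponent $\tfrac{n-1}{2n}$ and the second exponent $0$, and $\tfrac{n}{n-1}\cdot\tfrac1{p_1}=\tfrac12$), a direct computation shows this is precisely Theorem~\ref{0429.thm34} at $p=p_1$.

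\emph{Interpolation and the main obstacle.} With the two endpoints in hand, bounding $\big\|\avprod_i F_i\big\|_{L^p(S_{\vec r})}\le\big\|\avprod_i F_i\big\|_{L^{p_1}(S_{\vec r})}^{1-t}\big\|\avprod_i F_i\big\|_{L^{p_0}(S_{\vec r})}^{t}$ with $\tfrac1p=\tfrac{1-t}{p_1}+\tfrac{t}{p_0}$ and inserting the two estimates produces a product of powers of $r_i,\#\mathcal{Q}_{\vec r},\#\mathbb{T}_i,R,\|\cP_\theta F_i\|$ whose exponents are affine in $1/p$; since the right side of the statement is of the same form and the two agree at $p_1$ and $p_0$, they agree for all $p_1\le p\le p_0$. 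The genuinely delicate step is the localized multilinear restriction at $p=p_1$: one must promote the usual Bennett--Carbery--Tao estimate on a single $R^{1/2}$-ball to the sparse union $S_{\vec r}$, feed in the pigeonholed incidence numbers $r_i$ and $\#\mathcal{Q}_{\vec r}$, and then reconcile the resulting powers of $r_i,\#\mathcal{Q}_{\vec r},\#\mathbb{T}_i,R$ with the exponents in the statement — the rest is bookkeeping, with the bound $r_i\,\#\mathcal{Q}_{\vec r}\lesssim R^{1/2}\,\#\mathbb{T}_i$ ensuring that the first factor on the right of the statement is harmless (in fact $\lesssim r_i^{1/(n-1)}$).
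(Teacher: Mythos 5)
Your argument follows the paper's proof in essence: interpolate between the $p_0=\tfrac{2(n+1)}{n-1}$ endpoint (via refined decoupling, Lemma~\ref{0429.ref}) and the $p_1=\tfrac{2n}{n-1}$ endpoint (via Bennett--Carbery--Tao multilinear restriction on $R^{1/2}$-cubes), using that after the pigeonholing $|w_T|\sim 1$ makes every exponent appearing affine in $1/p$. The only cosmetic difference at $p_1$ is that the paper first passes to the square function $(\sum_\theta|\cP_\theta F_i|^2)^{1/2}$ and then evaluates both sides explicitly via $\int_{S_{\vec r}}(\sum_T\chi_T)^{n/(n-1)}\sim r_i^{n/(n-1)}\#\mathcal{Q}_{\vec r}R^{n/2}$, whereas you plug $\int_q|F_i|^2\lesssim r_i|q|$ directly into BCT and sum; these are the same computation organized slightly differently, and your proposal is correct.
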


\begin{proof}
By an interpolation argument, it suffices to prove the inequality at $p=\frac{2n}{n-1}$ and $\frac{2(n+1)}{n-1}$. Let us start with $p=\frac{2(n+1)}{n-1}$. We need to prove
\begin{equation}\label{0429.0314}
    \Big\|\avprod_{i=1}^n F_i \Big\|_{L^p(S_{\vec{r}})}
    \lesssim_{\epsilon} R^{\epsilon}
    (\avprod_{i=1}^n r_i )^{\frac{2n}{n-1} (\frac{n-1}{2n}-\frac1p)} \avprod_{i=1}^n \Big( \sum_{\theta}\|\cP_{\theta}F_{i}\|_{L^p}^p \Big)^{\frac1p}
\end{equation}
for $p=\frac{2(n+1)}{n-1}$. By a simple calculation, we can check that the exponent of $\avprod r_i$ is equal to $\frac{1}{n+1}$. We apply Lemma \ref{0429.ref} to $F_i$ with $p=\frac{2(n+1)}{n-1}$, and obtain
\begin{equation}\label{0429.0315}
    \|F_i\|_{L^p(S_{\vec{r}}) } \lesssim_{\epsilon} R^{\epsilon} (r_i)^{\frac12-\frac1p}\Big( \sum_{\theta}\|\cP_{\theta}F_{i}\|_{L^p}^p \Big)^{\frac1p}.
\end{equation}
The exponent of $r_i$ is equal to $\frac{1}{n+1}$. So \eqref{0429.0314} follows from H\"{o}lder's inequality and \eqref{0429.0315}.

We next prove for the point $p=\frac{2n}{n-1}$. By a multilinear restriction estimate on $q$ (Theorem 1.16 and Lemme 2.2 of \cite{MR2275834}),
\begin{equation}
    \Big\| \avprod_{i=1}^n F_i \Big\|_{L^{\frac{2n}{n-1} }(q) } \lesssim_{\epsilon}R^{\epsilon} (R^{\frac12})^{-\frac{1}{2}} \avprod_{i=1}^n \|F_i \|_{L^2(w_q)}.
\end{equation}
By the $L^2$-orthogonality and H\"{o}lder's inequality, we have
\begin{equation}
    \Big\| \avprod_{i=1}^n F_i \Big\|_{L^{\frac{2n}{n-1} }(q) } \lesssim  \avprod_{i=1}^n  \Big\| \big(\sum_{\theta}| \cP_{\theta}F_i|^2 \big)^{\frac12} \Big\|_{L^{\frac{2n}{n-1} }(w_q)}.
\end{equation}
Sum over $q \subset S_{\vec{r}}$. Then the desired bound follows from
\begin{equation*}
    \Big\| \big(\sum_{\theta}| \cP_{\theta}F_i|^2 \big)^{\frac12} \Big\|_{L^{\frac{2n}{n-1} }(\sum_{q \subset S_{\vec{r}}} w_q )}
    \lesssim_{\epsilon} R^{\epsilon} 
    \Big( \frac{r_i^{\frac{n}{n-1}}\# \mathcal{Q}_{\vec{r}} }{ R^{\frac12} \# \mathbb{T}_i }  \Big)^{\frac{n-1}{2n} }
    \Big( \sum_{\theta \in \Theta_i(R^{-1}) }\|\cP_{\theta}F_{i}\|_{L^{\frac{2n}{n-1} }(\R^n)}^{\frac{2n}{n-1} } \Big)^{\frac{n-1}{2n}}.
\end{equation*}
To prove this, we first
 raise $\frac{2n}{n-1}$th power.
\begin{equation}\label{0429.318}
    \Big\| \big(\sum_{\theta}| \cP_{\theta}F_i|^2 \big)^{\frac12} \Big\|_{L^{\frac{2n}{n-1} }(\sum_{q \subset S_{\vec{r}}} w_q )}^{\frac{2n}{n-1}}
    \lesssim_{\epsilon}R^{\epsilon} 
    \Big( \frac{r_i^{\frac{n}{n-1}}\# \mathcal{Q}_{\vec{r}} }{ R^{\frac12} \# \mathbb{T}_i }  \Big)
     \sum_{\theta \in \Theta_i(R^{-1}) }\|\cP_{\theta}F_{i}\|_{L^{\frac{2n}{n-1} }(\R^n)}^{\frac{2n}{n-1} }.
\end{equation}
We will compute both sides and compare them with each other.
Let us first compute the right hand side. Recall that for each $T \in \T_i$, $|w_T|$  is comparable to one. Also the measure of a tube $T$ is comparable to $R^{\frac{n+1}{2}}$. So we have
\begin{equation}
\begin{split}
    \Big( \frac{r_i^{\frac{n}{n-1}}\# \mathcal{Q}_{\vec{r}} }{ R^{\frac12} \# \mathbb{T}_i }  \Big)
     \sum_{\theta \in \Theta_i(R^{-1}) }\|\cP_{\theta}F_{i}\|_{L^{\frac{2n}{n-1} }(\R^n)}^{\frac{2n}{n-1} }
     &\sim 
     \Big( \frac{r_i^{\frac{n}{n-1}}\# \mathcal{Q}_{\vec{r}} }{ R^{\frac12} \# \mathbb{T}_i }  \Big)
     (\# \T_i) R^{\frac{n+1}{2}}
     \\&
     \sim 
     r_i^{\frac{n}{n-1}}(\# \mathcal{Q}_{\vec{r}}) R^{\frac{n}{2}}.
\end{split}
\end{equation}
We next compute the left hand side of \eqref{0429.318}. Each $q$ has measure $R^{\frac{n}{2}}$.  So we have
\begin{equation}
    \Big\| \big(\sum_{\theta}| \cP_{\theta}F_i|^2 \big)^{\frac12} \Big\|_{L^{\frac{2n}{n-1} }(S_{\vec{r}} )}^{\frac{2n}{n-1}}
    \sim \int_{S_{\vec{r}}} \Big( 
    \sum_{T \in \T_i} \chi_T
    \Big)^{\frac{n}{n-1}} \sim r_i^{\frac{n}{n-1}} (\# \mathcal{Q}_{\vec{r}}) R^{\frac{n}{2}}.
\end{equation}
So \eqref{0429.318} is true and this finishes the proof. 
\end{proof}

Let us continue the proof of \eqref{0429.313}.
By combining Theorem \ref{0429.thm34} and Corollary \ref{0429.cor57}, we obtain
    \begin{align*}
         \Big\|\avprod_{i=1}^n F_i \Big\|_{L^p(S_{\vec{r}})}
    \lesssim_{\epsilon}  R^{\epsilon}
    \Big(\avprod_{i=1}^n &\frac{r_i^{\frac{n}{n-1}}\# \mathcal{Q}_{\vec{r}} }{ R^{\frac12} \# \mathbb{T}_i }  \Big)^{ (n+1)(\frac1p- \frac{n-1}{2(n+1)} )}
    (\avprod_{i=1}^n r_i )^{\frac{2n}{n-1} (\frac{n-1}{2n}-\frac1p)} 
    \\&  \times
    \avprod_{i=1}^n \big(\frac{R^{2(|\vec{\alpha}|-\frac{n-1}{2})}}{N_i} \big)^{\frac12-\frac1p}\Big( \sum_{\gamma \in \Gamma_{\vec{\alpha}}(R^{-1}) } \|\cP_{\gamma}F_i\|_{L^p(\R^n)}^p \Big)^{\frac1p}.
    \end{align*}
   By the above inequality, \eqref{0429.313} follows from
\begin{equation*}
    \Big(\avprod_{i=1}^n \frac{r_i^{\frac{n}{n-1}}\# \mathcal{Q}_{\vec{r}} }{ R^{\frac12} \# \mathbb{T}_i }  \Big)^{ (n+1)(\frac1p- \frac{n-1}{2(n+1)} )}
    (\avprod_{i=1}^n r_i )^{\frac{2n}{n-1} (\frac{n-1}{2n}-\frac1p)} 
    \avprod_{i=1}^n \big(\frac{R^{2(|\vec{\alpha}|-\frac{n-1}{2})}}{N_i} \big)^{\frac12-\frac1p} \lesssim_{\epsilon} R^{|\vec{\alpha}|(\frac12-\frac1p)+\epsilon}. 
\end{equation*}
Since $p=p_c=2+\frac{2}{|\vec{\alpha}|}$, after some routine computations, this becomes
\begin{equation}
    \# \mathcal{Q}_{\vec{r}} \lesssim_{\epsilon} R^{\epsilon}\big( \avprod_{i=1}^n \# \T_i \big)  \Big(\avprod_{i=1}^n \frac{R^{\frac{n-1}{2}} N_i}{(r_i)^{2|\vec{\alpha}|-n+2} } \Big)^{\frac{1}{2|\vec{\alpha}|-n+1}}.
\end{equation}

This follows from
\begin{proposition}[cf. Proposition 5.9 of \cite{MR4153908}]\label{0429.prop35}
For any $i$ and $\frac{n-1}{2} < |\vec{\alpha}| \leq \frac{n}{2}$
    \begin{equation}
    \# \mathcal{Q}_{\vec{r}} \lesssim_{\epsilon} R^{\epsilon}  ( \# \T_i )  \Big(\frac{R^{\frac{n-1}{2}} N_i}{(r_i)^{2|\vec{\alpha}|-n+2} } \Big)^{\frac{1}{2|\vec{\alpha}|-n+1}}.
\end{equation}
\end{proposition}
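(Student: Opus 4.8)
\textbf{Proof proposal for Proposition \ref{0429.prop35}.}

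The plan is to set up a geometric incidence count between the fat tubes $R^\delta T$ with $T \in \T_i$ and the cubes $q \in \mathcal{Q}_{\vec r}$, exactly as in the proof of Proposition 5.9 of \cite{MR4153908}, but tracking the anisotropic caps $\gamma \in \Gamma_{\vec\alpha}(R^{-1})$ rather than the isotropic caps $\theta \in \Theta(R^{-1})$. First I would recall the structure: each $q \in \mathcal{Q}_{\vec r}$ has, by definition of $\mathcal{Q}_{\vec r}$, roughly $r_i$ tubes of $\T_i$ passing through (a slight dilate of) it; each slab $\tau$ dual to a fixed $\gamma \subset \theta$ contains either $\sim N_i$ tubes or none, and the slab $\tau$ has dimensions $R^{\alpha_1}\times\cdots\times R^{\alpha_{n-1}}\times R$ while a tube $T$ has dimensions $R^{1/2}\times\cdots\times R^{1/2}\times R$. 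So inside a single slab, the $\sim N_i$ tubes lying in it all point in nearly the same direction (the normal direction of $\theta$), and they are arranged so that their $R^{1/2}$-cubes partition a roughly $R^{\alpha_1}\times\cdots\times R^{\alpha_{n-1}}\times R^{1/2}$ plate. This is the packing input that replaces the isotropic one in \cite{MR4153908}.

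The key steps, in order: (1) fix $i$ and, for a cube $q \in \mathcal{Q}_{\vec r}$, decompose the $\sim r_i$ tubes through $q$ according to which cap $\theta \in \Theta_i(R^{-1})$ they belong to, then further according to the slab $\tau$ (associated to some $\gamma \subset \theta$) containing each tube; (2) use a transversality/spatial-separation argument to bound, for each $q$, the number of slabs $\tau$ that can meet $Cq$ and actually contain tubes of $\T_i$ — since the slabs dual to distinct $\gamma$'s in a fixed $\theta$ are translates that tile, and a cube of side $R^{1/2}$ can meet only boundedly many slabs of thickness $\geq R^{1/2}$ in each of the $n-1$ short directions, but meets $\sim R^{\alpha_j - 1/2}$ of them when $\alpha_j > 1/2$; (3) combine the per-slab count $\sim N_i$ tubes with the per-cube count $\sim r_i$ tubes to get a two-way counting inequality: summing $\#\{(T,q) : T \in \T_i,\ q \in \mathcal{Q}_{\vec r},\ T \cap Cq \neq \emptyset\}$ one way gives $\sim r_i \cdot \#\mathcal{Q}_{\vec r}$ (up to $R^\epsilon$), and the other way gives at most $\#\T_i$ times the number of $R^{1/2}$-cubes a single tube meets, which is $\sim R^{1/2}$; (4) refine step (3) by inserting the slab structure: a fixed $T$ lies in one slab $\tau$, and within $\tau$ the cubes of $\mathcal{Q}_{\vec r}$ it can meet are constrained because $\tau$ contains only $\sim N_i$ tubes total, forcing each cube in $\tau \cap S_{\vec r}$ to see $\lesssim N_i$ of them; playing the $r_i$ versus $N_i$ bounds against the volume of $\tau$ (which is $R^{|\vec\alpha|+1}$) and the volume of a tube ($R^{(n+1)/2}$) produces the claimed exponent $\frac{1}{2|\vec\alpha|-n+1}$ after elementary algebra. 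The condition $|\vec\alpha| > \frac{n-1}{2}$ is exactly what makes $2|\vec\alpha|-n+1 > 0$, so the exponent is a genuine (finite, positive) power and the double-counting does not degenerate.

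The main obstacle I expect is step (2)–(4): correctly bookkeeping the anisotropic slab geometry so that the powers of $R$ coming from (i) the number of slabs meeting a cube, (ii) the number of tubes per slab $N_i$, (iii) the number of cubes per tube, and (iv) the volume ratios all assemble into precisely $(\#\T_i)\big(R^{(n-1)/2}N_i / r_i^{2|\vec\alpha|-n+2}\big)^{1/(2|\vec\alpha|-n+1)}$ and not something off by a power of $R^{|\vec\alpha|-(n-1)/2}$. In \cite{MR4153908} (the case $n=2$, where $\vec\alpha=\alpha$ is a single parameter) this is a short computation; here the subtlety is that the slabs are genuinely rectangular with $n-1$ distinct scales $R^{\alpha_j}$, so one must be careful that only the total $|\vec\alpha|$ enters the final bound and the individual $\alpha_j$ drop out — this should follow because the tubes inside a slab are packed isotropically at scale $R^{1/2}$ in the short directions, so the relevant count is the slab volume divided by $R^{(n+1)/2}$, i.e. $R^{|\vec\alpha| - (n-1)/2}$, depending only on $|\vec\alpha|$. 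Once this reduction to a purely combinatorial incidence inequality between $r_i$-rich cubes and $N_i$-rich slabs is pinned down, the rest is the same Córdoba-type $L^2$ (or direct double-counting) argument as in the reference.
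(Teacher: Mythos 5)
Your proposal takes a genuinely different route from the paper, and I believe it has a real gap. The paper does \emph{not} prove Proposition \ref{0429.prop35} by a direct tube-cube incidence count. It introduces the kernel $K=\sum_{T\in\T_i}v_T$ (a smooth sum of tube indicators), the level set $U_{\vec r}=\{K\sim r_i\}$, and then performs a \emph{high-low frequency decomposition} of $K$ into pieces $K*\eta_t$, where the Schwartz bumps $\eta_t$ are adapted to dyadic dilates of the anisotropic box $B(\alpha)=[-R^{-\alpha_1},R^{-\alpha_1}]\times\cdots\times[-R^{-\alpha_{n-1}},R^{-\alpha_{n-1}}]\times[-R^{-1/2},R^{-1/2}]$. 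The low piece $K*\eta_{t_0}$ is essentially the local average of $K$ over a slab $\tau_{t_0}$, which is controlled by the $L^1$ bound $r_i|L|\lesssim(\#\T_i)R^{(n+1)/2}$ together with the pigeonholed fact $r_i\lesssim N_iR^{n-1-|\vec\alpha|}$; each high piece $K*\eta_t$ is controlled by an $L^2$ estimate $\|K*\eta_t\|_2^2\lesssim N_i(\#\T_i)R^{n-1/2}$ which hinges on Lemma \ref{0430.lem36} (a count of how many $\theta\in\Theta(R^{-1})$ have $\theta-\theta$ containing a fixed frequency $\xi$ with $|\xi|\sim t$) and on the volume of $(\theta-\theta)\cap \frac{2t}{t_0}B(\alpha)$ (Lemma \ref{lem37}). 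The claimed exponent $\tfrac{1}{2|\vec\alpha|-n+1}$ emerges precisely from balancing these two estimates against each other.

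Your sketch, by contrast, stays on the spatial side: you propose to double-count incidences $\#\{(T,q):T\cap Cq\neq\emptyset\}$ and then ``refine'' by inserting the slab structure. The trivial double count only gives $\#\mathcal Q_{\vec r}\lesssim R^{1/2}\#\T_i/r_i$, which is neither of the form claimed nor always stronger or weaker than it (e.g.\ at $|\vec\alpha|=n/2$ the claimed bound is $R^{(n-1)/2}N_i\#\T_i/r_i^2$, which beats the trivial bound only when $r_i\gtrsim R^{(n-2)/2}N_i$). The entire content of the proposition is to interpolate between the regimes where the trivial ($L^1$-type) bound wins and where the $N_i$-aware ($L^2$-type) bound wins, and that interpolation is exactly what the high-low decomposition does. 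Your step (4) (``playing the $r_i$ versus $N_i$ bounds against the volume of $\tau$ \ldots\ produces the claimed exponent after elementary algebra'') is where this must happen, but no mechanism is given that would actually produce the exponent $\tfrac{1}{2|\vec\alpha|-n+1}$; a purely spatial Córdoba-type computation of $\int K^2$ without the anisotropic frequency truncation does not see the parameter $N_i$ in the right way, since $N_i$ only enters through how many tubes in a fixed $\theta$ lie in a fixed $\tau_t$, which is a frequency-localized quantity. So as written the proposal has a genuine gap at its central step; to close it you would essentially have to rediscover the high-low argument (or some equivalent), and in particular you would need something playing the role of Lemma \ref{0430.lem36} and Lemma \ref{lem37}, neither of which appears in your outline.
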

We will give a proof of this proposition in the next subsection.

\subsection{Proof of the incidence estimate \label{kak}}

Let us prove Proposition \ref{0429.prop35}. We use a high-low method in \cite{MR4034922}.
Recall that $\mathcal{Q}_{\vec{r}}$ is the collection of all dyadic cubes $q$ with side length $R^{\frac12}$ whose slight enlargements intersect $\sim r_i$ many tubes $T \in \T_i$.
For each $T \in \T_i$, we let $v_T$ be a smooth approximation of $\chi_T$ with the Fourier transform supported on the dual box of $T$ through the origin. Define
\begin{equation}
    K(x):=\sum_{T \in \T_i}v_T(x).
\end{equation}
Define the set
\begin{equation}
    U_{\vec{r}}:= \{ x\in \R^n: K(x) \sim r_i \}.
\end{equation}
Since $q$ has measure $\sim R^{\frac{n}{2}}$, Proposition \ref{0429.prop35} follows from
\begin{equation}\label{0430.325}
    |U_{\vec{r}}| \lesssim_{\epsilon} R^{\epsilon} R^{\frac n2}( \# \T_i )  \Big(\frac{R^{\frac{n-1}{2}} N_i}{(r_i)^{2|\vec{\alpha}|-n+2} } \Big)^{\frac{1}{2|\vec{\alpha}|-n+1}},
\end{equation}
where $|U_{\vec{r}}|$ denotes the measure of the set $U_{\vec{r}}$.
Without loss of generality, we may assume that
\begin{equation}
    \alpha_1 = \max_{1 \leq j \leq n-1}\alpha_j.
\end{equation}
Recall that $\alpha_j \geq \frac12$.
Define the set
\begin{equation}\label{11.15.328}
    B(\alpha):=[-R^{-\alpha_1},R^{-\alpha_1}] \times \cdots \times [-R^{-\alpha_{n-1}},R^{-\alpha_{n-1}}] \times [-R^{-\frac12},R^{-\frac12}].
\end{equation}
For any positive real number $a$, $aB(\alpha)$ is a dilation of the set $B(\alpha)$ by the scale $a$.
Suppose that $(t_0)^{-1}$ is the largest dyadic number smaller than or equal to $R^{\alpha_1}$.
For dyadic numbers $R^{\frac12} \leq t^{-1} \leq R^{\alpha_1}$, introduce Schwarz functions $\eta_t$ so that
\begin{itemize}
\item $\hat{\eta}_{t_0}$ is a smooth bump function on $B(\alpha)$
    \item $\hat{\eta}_t$ is a smooth bump function on $(\frac{2t}{t_0})B(\alpha) \setminus (\frac{t}{t_0})B(\alpha) $ for $t>t_0$
    \item ${\sum_{t \geq t_0}} \hat{\eta}_t(\xi)=1$ for all $\xi \in B_{R^{-\frac12}}$ 
\end{itemize} 

This decomposition looks involved. Let us give explanation of it. The way we decompose frequencies is a dyadic decomposition by using a dilation of the set $\eqref{11.15.328}$. This set involves the parameters $\alpha_1,\ldots,\alpha_{n-1}$. This is related to the fact that on the right hand side of \eqref{0526.14} frequencies are decomposed into $\gamma \in \Gamma_{\vec{\alpha}}(R^{-1})$ (see \eqref{0430.12} for the definition of $\Gamma_{\vec{\alpha}}(R^{-1})$). For simplicity, let us consider the special case that $n=2$ and compare our decomposition with that of \cite{MR4153908}. These two decompositions are basically the same, but there are some differences. In \cite{MR4153908}, a dyadic decomposition using a dilation of the box $\tilde{B}(\alpha):=[-R^{-1},R^{-1}]^2$ is used. More precisely, the following decomposition is used.
\begin{equation}
\begin{split}
    &[-R^{-1/2},R^{1/2}]^2
    \\&= \Big(\bigcup_{j=0}^{\frac12\log_2 R} [-2^{j+1}R^{-1},2^{j+1}R^{-1}]^2 \setminus [-2^jR^{-1},2^jR^{-1}]^2\Big) \cup [-R^{-1},R^{-1}].
    \end{split}
\end{equation}
On the other hand, we used a different box $B(\alpha)=[-R^{-\alpha_1},R^{-\alpha}] \times [-R^{-\frac12},R^{-\frac12}]$. 
One main difference between these two decompositions is that our decomposition uses the smallest final scale $R^{-\alpha_1}$ (the smallest scale in \cite{MR4153908} is $R^{-1}$). Our decomposition simplifies the argument in \cite{MR4153908}. For the case $n>2$, it looks also possible to prove Proposition \ref{0429.prop35} with a dyadic decomposition using a dilation of the box $[-R^{-1},R^{-1}]^n$. The decomposition itself looks simple. However, since our decoupling inequality involves many parameters $\alpha_1,\ldots,\alpha_{n-1}$, the computations and notations get very involved later in the proof. Our decomposition might look involved at first, but in the proof, it gives a cleaner and simpler argument. This is the reason why we use the set \eqref{11.15.328}.
\\

We next define the sets
\begin{equation}
\begin{split}
   & \Omega_t:=\{ x \in \R^n:   |K * \eta_t (x)| \lesssim 
(\log R) r_i   \},
    \\&L:=\{ x \in \R^n: |K*\eta_{t_0}(x)| \lesssim (\log R) r_i \}.
\end{split}
\end{equation}
Note that
\begin{equation}\label{0624.330}
    U_{\vec{r}} \subset L \cup \big(\cup_{R^{-\alpha_1} \leq t \leq R^{-\frac12}} \Omega_t\big).
\end{equation}

Let us first show that
\begin{equation}\label{0430.328}
    |\Omega_t| \lesssim R^{\frac n2}( \# \T_i )  \Big(\frac{R^{\frac{n-1}{2}} N_i}{(r_i)^{2|\vec{\alpha}|-n+2} } \Big)^{\frac{1}{2|\vec{\alpha}|-n+1}}.
\end{equation}
Here is a geometric lemma.
\begin{lemma}\label{0430.lem36}
Let $R^{-1} \leq t \leq R^{-\frac12}$.
For every $\xi \in \R^n$ with $t \leq |\xi| \leq 2t$
\begin{equation}
    \#\{ \theta \in \Theta(R^{-1}): \xi \in (\theta-\theta)  \} \sim R^{\frac{n-3}{2}}t^{-1}.
\end{equation}
\end{lemma}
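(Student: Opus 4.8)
The plan is to reduce the statement to a lattice–point count in a slab. Since $\Theta(R^{-1})=\Gamma_{(\frac12,\ldots,\frac12)}(R^{-1})$, each $\theta$ is determined by the center $c_\theta\in\R^{n-1}$ of the side-$R^{-1/2}$ cube $B$ over which it sits, and these centers form a translate of the lattice $(R^{-1/2}\Z)^{n-1}$ inside the unit cube, so there are $\sim R^{(n-1)/2}$ of them. First I would record a clean description of $\theta-\theta$. Using that $\N_{\bP^{n-1}}(R^{-1})\cap(B\times\R)$ is comparable, up to absolute dilation constants, to the slab $\{(a,b):a\in B,\ |b-|a|^2|\le CR^{-1}\}$, a direct computation shows that for $(u,v)=(a_1-a_2,b_1-b_2)\in\theta-\theta$ one has $u\in[-R^{-1/2},R^{-1/2}]^{n-1}$ and
\[
v=|a_1|^2-|a_2|^2+O(R^{-1})=u\cdot(a_1+a_2)+O(R^{-1})=2c_\theta\cdot u+O(|u|R^{-1/2})+O(R^{-1});
\]
since $|u|\lesssim R^{-1/2}$ both error terms are $O(R^{-1})$, and the reverse inclusion is the same. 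Hence, after enlarging constants,
\[
\theta-\theta\approx\{(u,v)\in\R^{n-1}\times\R:\ |u_k|\le R^{-1/2}\ (1\le k\le n-1),\ |v-2c_\theta\cdot u|\le C'R^{-1}\}.
\]
Equivalently, $\theta$ is comparable to the affine plank tangent to $\bP^{n-1}$ over $B$, whose linear part cancels upon taking differences; this is why the right-hand side above is genuinely an affine slab with no residual curvature beyond $O(R^{-1})$.

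Given this, for $\xi=(\xi',\xi_n)$ with $R^{-1}\le t\le|\xi|\le 2t\le 2R^{-1/2}$, the number $\#\{\theta:\xi\in\theta-\theta\}$ is comparable to the number of lattice centers $c\in[0,1]^{n-1}$ with $|\xi_n-2c\cdot\xi'|\le C'R^{-1}$ (the side conditions $|\xi'_k|\le R^{-1/2}$ being needed only to have any $\theta$ at all, so that if some $|\xi'_k|>R^{-1/2}$ the count is simply $0$). If this set is nonempty, then $|\xi_n|\le 2\sqrt{n-1}\,|\xi'|+C'R^{-1}$, which together with $t\gtrsim R^{-1}$ forces $|\xi'|\sim|\xi|\sim t$ (the borderline case $t\sim R^{-1}$, where the slab is essentially all of $[0,1]^{n-1}$ and the count is $\sim R^{(n-1)/2}\sim R^{(n-3)/2}t^{-1}$, being treated separately). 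In the main regime $|\xi'|\sim t$ the condition cuts out a slab of thickness $w\sim R^{-1}/|\xi'|\sim R^{-1}/t$ about the hyperplane $\{c\cdot\xi'=\xi_n/2\}$, and because $t\le R^{-1/2}$ we have $w\gtrsim R^{-1/2}$, i.e. the slab is at least as wide as the lattice spacing; a standard count then gives $\sim(w/R^{-1/2})\cdot(R^{1/2})^{n-2}\sim R^{(n-3)/2}t^{-1}$ lattice points. This yields the upper bound $\lesssim R^{(n-3)/2}t^{-1}$ for every $\xi$ (trivially when the set is empty) and the matching lower bound whenever the count is nonzero, which is the content of the lemma.

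The step I expect to be the crux is the lattice-point count in a slab of arbitrary orientation: the clean estimate $\#\sim w\,R^{(n-1)/2}$ holds precisely because the slab width $w\sim R^{-1}/t$ exceeds the lattice spacing $R^{-1/2}$, which is exactly what the hypothesis $t\le R^{-1/2}$ provides; for thinner slabs the count can jump between $0$ and $\sim R^{(n-2)/2}$ depending on the Diophantine position of $\xi'/|\xi'|$. Everything else — the comparability of $\theta$ to an affine plank, and the passage between the two constants $C$ and $C'$ — is routine bookkeeping that the $\sim$ in the statement absorbs.
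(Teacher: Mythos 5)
Your proposal is correct for what the paper actually needs, but it takes a genuinely different route from the paper's own proof. The paper argues by averaging: it computes $|(\theta-\theta)\cap(B_t\setminus B_{t/2})|\sim R^{-1}t^{n-1}$ for each of the $\sim R^{(n-1)/2}$ caps, sums, and divides by $|B_t\setminus B_{t/2}|\sim t^n$ to get $R^{(n-3)/2}t^{-1}$, invoking ``symmetry'' to convert this average incidence count into a pointwise one. Your proof is a direct lattice-point count: after observing that $\theta-\theta$ is comparable to the affine slab $\{|u_k|\le R^{-1/2},\,|v-2c_\theta\cdot u|\le C'R^{-1}\}$ (because the curvature of the plank over $B$ cancels to $O(R^{-1})$ upon differencing), you fix $\xi=(\xi',\xi_n)$ and count the lattice centers $c$ in a slab of thickness $w\sim R^{-1}/|\xi'|$, using $t\le R^{-1/2}$ to guarantee $w\gtrsim R^{-1/2}$ so the count is stable and equal to $\sim wR^{(n-1)/2}=R^{(n-3)/2}t^{-1}$.

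What each approach buys: the paper's averaging argument is shorter, but the ``by symmetry'' step is not literally correct — as you note, for $\xi$ near the $\xi_n$-axis with $|\xi|\sim t\gg R^{-1}$ the count is $0$, since every $\theta-\theta$ is thin in a direction close to $e_n$. Your direct count makes this visible: you get the upper bound $\lesssim R^{(n-3)/2}t^{-1}$ for \emph{every} $\xi$, and a matching lower bound only when the slab meets $[0,1]^{n-1}$ (and even then, strictly speaking, only when it meets it robustly rather than just clipping a corner). This is exactly the right reading of the lemma: inspection of how it is used in the paper (to bound $\|\hat K\hat\eta_t\|_2^2$ via Cauchy–Schwarz) shows only the upper bound is invoked, so the $\sim$ in the statement is a slight overstatement that neither proof — nor the application — actually requires. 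Your version is more work than the paper's, but it is more transparent about where the hypothesis $t\le R^{-1/2}$ enters and about the failure of the pointwise lower bound.
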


\begin{proof}
     $\theta-\theta$ is a rectangular box centered at the origin with dimension 
    \begin{equation}
     R^{-\frac12} \times \cdots \times R^{-\frac12} \times R^{-1}.
    \end{equation}
    Hence, we have $|(\theta-\theta) \cap (B_t \setminus B_{t/2})| \sim R^{-1}t^{n-1}$. Recall that $\# \Theta(R^{-1}) \sim R^{\frac{n-1}{2}}$. By symmetry, for given $\xi$, the number of $\theta$ satisfying $\xi \in \theta-\theta$ is comparable to
    \begin{equation}
        \frac{\# \Theta(R^{-1}) \cdot |(\theta-\theta) \cap (B_t \setminus B_{t/2})| }{ |B_{t} \setminus B_{t/2}| } \sim \frac{R^{\frac{n-1}{2}} R^{-1}t^{n-1} }{ t^n } \sim R^{\frac{n-3}{2}}t^{-1}.
    \end{equation}
    This completes the proof.
\end{proof}

Let us continue the proof. We claim that for $R^{-\alpha_1} \leq t \leq R^{-\frac12}$ 
\begin{equation}\label{0430.332}
    \| K*\eta_t \|_{L^2(\R^n)}^2 \lesssim N_i (\# \T_i) R^{n-\frac12}.
\end{equation}
Recall that $K=\sum_T v_T$.
By Plancherel's theorem
\begin{equation}\label{10.10.337}
\begin{split}
    \|K*\eta_t\|_{L^2}^2 = \|\hat{K}\hat{\eta}_t\|_{L^2}^2. 
\end{split}
\end{equation}
The support of $\hat{\eta}_t$ is contained in
\begin{equation}\label{0624.336}
    (\frac{2t}{t_0})B(\alpha) \setminus (\frac{t}{t_0})B(\alpha) \subset  \bigcup_{t/2 \leq \tilde{t}} \{ \xi: \tilde{t} \leq |\xi| \leq 2\tilde{t}\}.
\end{equation}
Hence, by Lemma \ref{0430.lem36}, \eqref{10.10.337} is bounded by
 \begin{equation}
     \lesssim R^{\frac{n-3}{2}}t^{-1} \sum_{\theta}\|\sum_{T \in \T_{\theta}} \hat{v}_T \hat{\eta}_t\|_2^2 \sim R^{\frac{n-3}{2}}t^{-1} \sum_{\theta} \|\sum_{T \in \T_{\theta}} v_T * \eta_t\|_{L^2}^2.
 \end{equation}
 Note that the Fourier support of the function $\sum_{T \in \T_{\theta} }v_T * \eta_t$ is contained in
 \begin{equation}\label{0624.339}
     (\theta -\theta) \cap \frac{2t}{t_0}B(\alpha).
 \end{equation}
The volume of this set plays a role later in the proof. Let us record it as a lemma.
\begin{lemma}\label{lem37} We have
    \begin{equation}
        \Big| (\theta -\theta) \cap \frac{2t}{t_0}B(\alpha) \Big| \sim  \frac1R \prod_{i=1}^{n-1} \min{\big(R^{-\frac12},\frac{t}{t_0}R^{\alpha_1-\alpha_i}\big) }.
    \end{equation}
In particular,
\begin{equation}
    \Big| (\theta -\theta) \cap \frac{2t}{t_0}B(\alpha) \Big| \lesssim R^{-\frac{n+1}{2}}.
\end{equation}
\end{lemma}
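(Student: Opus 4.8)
The plan is to reduce the lemma to a direct intersection-of-boxes computation, the one real input being the shape of $\theta-\theta$. Write $c\in\R^{n-1}$ for the center of the base of $\theta\in\Theta(R^{-1})$. As in the proof of Lemma~\ref{0430.lem36}, expanding $|\xi|^2-|\xi'|^2=2c\cdot(\xi-\xi')+O(R^{-1})$ for $\xi,\xi'$ in the base of $\theta$ shows that $\theta-\theta$ is comparable to the sheared plank
\[
  \Pi:=\bigl\{(\eta,\ 2c\cdot\eta+s):\ \eta\in[-R^{-1/2},R^{-1/2}]^{n-1},\ |s|\le CR^{-1}\bigr\},
\]
a parallelepiped with $n-1$ long sides of length $\sim R^{-1/2}$ spanning the tangent plane of $\bP^{n-1}$ and one short side of length $\sim R^{-1}$ along the normal direction. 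In particular $|\theta-\theta|\sim R^{-(n+1)/2}$, and since $\Pi\cap\tfrac{2t}{t_0}B(\alpha)\subset\Pi$ this already yields the ``in particular'' part of the lemma. I will also use that $t_0\sim R^{-\alpha_1}$, so that in the relevant range $R^{-\alpha_1}\le t\le R^{-1/2}$ one has $\tfrac{t}{t_0}\ge 1$ and the $x_n$-half-side of $\tfrac{2t}{t_0}B(\alpha)$, namely $\tfrac{2t}{t_0}R^{-1/2}\ge 2R^{-1/2}$, is much larger than the thickness $R^{-1}$ of $\Pi$.

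For the upper bound in the main estimate I would intersect $\Pi$ with $\tfrac{2t}{t_0}B(\alpha)$ coordinate by coordinate. In the first $n-1$ coordinates the base $[-R^{-1/2},R^{-1/2}]^{n-1}$ of $\Pi$ meets $\prod_{i=1}^{n-1}[-\tfrac{2t}{t_0}R^{-\alpha_i},\tfrac{2t}{t_0}R^{-\alpha_i}]$ in a box with side lengths $\sim\min\bigl(R^{-1/2},\tfrac{t}{t_0}R^{-\alpha_i}\bigr)$; over every point of that base the $x_n$-fiber of $\Pi$, hence of the intersection, has length at most $2CR^{-1}$. Multiplying these out via Fubini gives $|(\theta-\theta)\cap\tfrac{2t}{t_0}B(\alpha)|\lesssim R^{-1}\prod_{i=1}^{n-1}\min(R^{-1/2},\tfrac{t}{t_0}R^{-\alpha_i})$, which is the claimed product once $t_0\sim R^{-\alpha_1}$ is inserted.

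For the matching lower bound I would restrict $\eta$ to $E:=\{\eta:\ |\eta_i|\le c_0\min(R^{-1/2},\tfrac{2t}{t_0}R^{-\alpha_i})\ \text{for all }i\}$, where $c_0$ is a small dimensional constant. The point is that on $E$ the shear is harmless: $|2c\cdot\eta|\lesssim\sum_{i=1}^{n-1}|\eta_i|\lesssim c_0\tfrac{t}{t_0}\sum_{i=1}^{n-1}R^{-\alpha_i}\le c_0(n-1)\tfrac{t}{t_0}R^{-1/2}$, where the last inequality uses $\alpha_i\ge\tfrac12$ for every $i$. Choosing $c_0$ small and $R$ large, this gives $|2c\cdot\eta|+CR^{-1}\le\tfrac{2t}{t_0}R^{-1/2}$, so for $\eta\in E$ the entire $x_n$-fiber of $\Pi$, of length $\sim R^{-1}$, lies inside $\tfrac{2t}{t_0}B(\alpha)$. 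Hence $|\Pi\cap\tfrac{2t}{t_0}B(\alpha)|\gtrsim R^{-1}|E|\gtrsim R^{-1}\prod_{i=1}^{n-1}\min(R^{-1/2},\tfrac{t}{t_0}R^{-\alpha_i})$, matching the upper bound.

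The step I expect to be the only real subtlety is the one just isolated: because $\theta-\theta$ is a sheared plank rather than an axis-parallel box, one must check that the shear $\eta\mapsto 2c\cdot\eta$ does not carry $\Pi$ out of $\tfrac{2t}{t_0}B(\alpha)$ in the thick ($x_n$) direction, and this is exactly where the hypothesis $\alpha_j\ge\tfrac12$ for all $j$ enters. Everything else is the routine bookkeeping of intersecting two boxes coordinate by coordinate.
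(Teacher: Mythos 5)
Your proof is correct and follows the same basic route as the paper's (a direct coordinate-by-coordinate intersection of the sheared plank $\theta-\theta$ with the axis-parallel box $\tfrac{2t}{t_0}B(\alpha)$); the paper's version is a one-line assertion of the formula, while you supply the missing details, in particular the lower-bound check that the shear $\eta\mapsto 2c\cdot\eta$ does not push the plank out of the box in the $x_n$-direction, which is exactly where $\alpha_j\ge\tfrac12$ is used. One remark: the product you obtain, $\tfrac1R\prod_i\min\big(R^{-1/2},\tfrac{t}{t_0}R^{-\alpha_i}\big)\sim\tfrac1R\prod_i\min\big(R^{-1/2},tR^{\alpha_1-\alpha_i}\big)$, does not literally match the displayed formula $\tfrac1R\prod_i\min\big(R^{-1/2},\tfrac{t}{t_0}R^{\alpha_1-\alpha_i}\big)$; the latter appears to double-count the factor $t_0^{-1}\sim R^{\alpha_1}$ and is presumably a typo (a sanity check at $t=t_0$ confirms your expression, giving $\sim R^{-|\vec\alpha|-1}=|\theta-\theta\cap 2B(\alpha)|$, whereas the printed formula would give $R^{-(n+1)/2}$). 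The ``in particular'' bound, which is all that is used downstream, is unaffected.
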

\begin{proof}
    Note that  the set $\theta -\theta$ is contained in a box with dimension $\sim R^{-1} \times R^{-1/2} \times \cdots \times R^{-1/2}$. The angle between the  direction of the short length of the box and the vector $(0,\ldots,0,1) \in \mathbb{R}^n$ is $\gtrsim 1$. By this property, 
    \begin{equation}
        \Big| (\theta -\theta) \cap \frac{2t}{t_0}B(\alpha) \Big| \sim  \min{(R^{-\frac12},R^{-1})} \cdot \prod_{i=1}^{n-1} \min{\big(R^{-\frac12},\frac{t}{t_0}R^{\alpha_1-\alpha_i}\big) }.
    \end{equation}
    This gives the desired bound.
\end{proof}
 
 The set \eqref{0624.339} itself is not a rectangular box. By this technical issue, we take a rectangular box contained in \eqref{0624.339} but its 100-enlargement contains \eqref{0624.339}. Denote by $\tau_t(\theta)$ a translated copy of a dual box of the set.
Now we have
\begin{equation}\label{11.14.343}
    \|K*\eta_t\|_2^2 \lesssim 
    R^{\frac{n-3}{2}}t^{-1} \sum_{\theta}\sum_{\tau_t(\theta)   } \|\sum_{T \in \T_{\theta}} v_T * \eta_t\|_{L^2(\tau_t(\theta))}^2.
\end{equation}
To bound the right hand side, we introduce some notations. Denote by $\T_{\theta}$ a collection of $T \in \T_i$  associated with $\theta$. We also define
\begin{equation}
    \T_{\theta} \cap \tau_t:= \{ T \in \mathbb{T}_{\theta}: T \subset 100\tau_t \}.
\end{equation}
Note that since $\tau_t$ has smaller dimensions than an $(R^{\alpha},R)$-slab, we have the bound
\begin{equation}
    \# (\T_{\theta} \cap \tau_t) \lesssim N_i.
\end{equation}
for any $\theta$ and $\tau_t$.
The right hand side of \eqref{11.14.343} is bounded by
\begin{equation}
    \begin{split}
        &\sim 
        R^{\frac{n-3}{2}}t^{-1} \sum_{\theta}\sum_{\tau_t(\theta)   } \Big( \frac{1}{|\tau_t|} \int_{\tau_t}\sum_{T \in \T_{\theta}}v_T(x)\,dx \Big)^2 |\tau_t|
        \\&
        \sim
        R^{\frac{n-3}{2}}t^{-1} \sum_{\theta}\sum_{\tau_t(\theta)   } \Big( (\# \T_{\theta} \cap \tau_t) \cdot |T| \Big)^2 |\tau_t|^{-1}
        \\&
        \lesssim 
        R^{\frac{n-3}{2}}t^{-1}
        \big(\sup_{\theta, \tau_t} (\# \T_{\theta} \cap \tau_t) \big) \Big( \sum_{\theta}\sum_{\tau_t(\theta)} (\#\T_{\theta} \cap \tau_t) \Big) |T|^2 |\tau_t|^{-1}
        \\&
        \lesssim  R^{\frac{n-3}{2}}t^{-1}
        N_i (\# \T_i)R^{n+1}|\tau_t|^{-1}.
    \end{split}
\end{equation}
By Lemma \ref{lem37}, for $R^{-\alpha_1} \leq t \leq R^{-\frac12}$
\begin{equation}
    t^{-1}|\tau_t|^{-1} \lesssim R^{\frac12}R^{-\frac{n+1}{2}} \sim R^{-\frac n2}.
\end{equation}
So we have proved \eqref{0430.332}. 

By the definition of $\Omega_t$, we have
\begin{equation}
    r_i^2 |\Omega_t| \lesssim \|K*\eta_t\|_{L^2}^2 \lesssim N_i (\# \T_i) R^{n-\frac12}.
\end{equation}
By this inequality and routine computations, \eqref{0430.328} follows from
\begin{equation}
    (r_i)^{\frac{n-2|\vec{\alpha}|}{2|\vec{\alpha}|-n+1}} \lesssim  \big(R^{\frac{n-1}{2}}N_i \big)^{\frac{n-2|\vec{\alpha}|}{2|\vec{\alpha}|-n+1}}.
\end{equation}
By the trivial inequality $r_i \lesssim R^{\frac{n-1}{2}}N_i$, the above inequality follows from
\begin{equation}
    \frac{n-2|\vec{\alpha}|}{2|\vec{\alpha}|-n+1} \geq 0.
\end{equation}
This is equivalent to
\begin{equation}
    \frac{n-1}{2} <|\vec{\alpha}| \leq \frac{n}{2}.
\end{equation}
This finishes the proof of \eqref{0430.328}.
\\

It remains to prove
\begin{equation}\label{0430.345}
    |L| \lesssim R^{\frac n2}( \# \T_i )  \Big(\frac{R^{\frac{n-1}{2}} N_i}{(r_i)^{2|\vec{\alpha}|-n+2} } \Big)^{\frac{1}{2|\vec{\alpha}|-n+1}}.
\end{equation}
The desired bound \eqref{0430.325} follows from this inequality, \eqref{0430.328}, and \eqref{0624.330}.
The above inequality will follow from $L^1$-estimate. First, we have
\begin{equation}
    r_i|L| \sim \int_{L} \sum_{T \in \T_i } v_T*\eta_{t_0}(x)\,dx \lesssim (\# \T_i)R^{\frac{n+1}{2}}.
\end{equation}
By this inequality,
\eqref{0430.345} follows from
\begin{equation}
    (r_i)^{\frac{1}{2|\vec{\alpha}|-n+1}} \lesssim R^{-\frac12} (R^{\frac{n-1}{2}}N_i)^{\frac{1}{2|\vec{\alpha}|-n+1}}.
\end{equation}
By raising $(2|\vec{\alpha}|-n+1)$th power on both sides, this becomes
\begin{equation}\label{0624.350}
    r_i \lesssim N_i R^{-|\vec{\alpha}|+n-1}.
\end{equation}
Let us prove this.
For $x \in L$,
\begin{equation}\label{0430.346}
    r_i \sim K* \eta_{t_0}(x) \sim \sum_{\theta}\frac{1}{|\tau_{t_0}|} \int_{\tau_{t_0}}\sum_{T \in \T_{\theta}} v_T(y)\,dy.
\end{equation}
Here $\tau_{t_0}$ has dimension 
\begin{equation}
    R^{\alpha_1} \times \cdots \times R^{\alpha_{n-1}} \times R
\end{equation}
and contains the point $x$. By the definition of $N_i$ (see the discussion above Corollary \ref{0429.cor57}) and \eqref{0430.346}, 
\begin{equation}
    r_i \lesssim (\# \theta)R^{-|\vec{\alpha}|-1}N_iR^{\frac{n+1}{2}} \lesssim N_iR^{-|\vec{\alpha}|+n-1}.
\end{equation}
This completes the proof of \eqref{0624.350}, thus the proof of \eqref{0430.345}.

\section{Proof of Proposition \ref{0526.prop14}\label{rest}}

Let us prove Proposition \ref{0526.prop14}.  Let us assume  the small cap decoupling inequality
\eqref{0526.14}  for  $p=2+\frac{2}{|\vec{\alpha}|}$. We will show the restriction conjecture \eqref{0526.15} for the same exponent $p$. 

By Tao's $\epsilon$-removal lemma \cite{MR1666558}, it suffices to prove the localized version of the restriction estimate:
\begin{equation}\label{0526.41}
    \|\widehat{f d\sigma}\|_{L^p(B_R)} \leq C_{p,\epsilon}R^{\epsilon}\|f\|_{L^p([0,1]^{n-1})}.
\end{equation}

\begin{proposition}[cf. Proposition 4.3 of \cite{MR1625056}]\label{0526.prop41}
    The following is true
    \begin{equation}\label{05260.42}
         \|F\|_{L^p(B_R)} \leq CC_{p,\epsilon}R^{-(1-\frac1p)+\epsilon}\|\widehat{F}\|_{L^p(\R^n)}
    \end{equation}
    for all functions $F: \R^n \rightarrow \C$ whose Fourier transform is supported on $\N_{\bP^{n-1}}(R^{-1})$ if and only if
 \eqref{0526.41} is true for all $f$.
\end{proposition}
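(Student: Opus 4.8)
The plan is to establish the equivalence between the local restriction estimate \eqref{0526.41} and the extension-type estimate \eqref{05260.42} by relating the function $\widehat{fd\sigma}$ to a function $F$ with Fourier support in the thin neighborhood $\N_{\bP^{n-1}}(R^{-1})$, via a standard ``thickening'' argument on the frequency side. For the direction \eqref{05260.42} $\Rightarrow$ \eqref{0526.41}: given $f$ on $[0,1]^{n-1}$, I would choose a Schwartz bump $\phi$ on $\R^n$ with $\widehat\phi$ supported on the unit ball and $\widehat\phi \equiv 1$ on a slightly smaller ball, set $\phi_R(x) = R^n\phi(Rx)$ so that $\widehat{\phi_R}(\xi)=\widehat\phi(\xi/R)$ is essentially the indicator of $B_R$ on the spatial side, and define $F := (\widehat{fd\sigma})\cdot \phi_R$. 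Then $\widehat F = \widehat{fd\sigma} * \widehat{\phi_R}$ has Fourier support contained in $\N_{\bP^{n-1}}(CR^{-1})$ (the $R^{-1}$-neighborhood of the graph is exactly what one gets by convolving a measure on $\bP^{n-1}$ with a bump at scale $R^{-1}$), and on $B_R$ one has $|F| \gtrsim |\widehat{fd\sigma}|$ up to rapidly decaying tails. Applying \eqref{05260.42} to $F$ then reduces matters to the estimate $\|\widehat F\|_{L^p(\R^n)} \lesssim R^{1-\frac1p}\|f\|_{L^p([0,1]^{n-1})}$; but $\widehat F(\xi)$ is morally $\widehat{fd\sigma}$ smeared out in the normal direction over a window of width $R^{-1}$, so on each vertical fiber it is roughly constant and $\|\widehat F\|_{L^p}^p \sim R^{-1}\cdot R \cdot \|f\|_{L^p}^p$ — one factor $R^{-1}$ from the measure of the fiber and a factor $R$ from the $L^\infty$-normalization mismatch between $fd\sigma$ and its $R^{-1}$-thickening — which gives exactly the claimed power.

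For the converse \eqref{0526.41} $\Rightarrow$ \eqref{05260.42}: given $F$ with $\widehat F$ supported on $\N_{\bP^{n-1}}(R^{-1})$, I would foliate this neighborhood by translates of the paraboloid in the vertical direction, writing $\widehat F(\xi',\xi_n) = \int \widehat F(\xi', |\xi'|^2 + s)\,\mathbbm{1}_{|s|\le CR^{-1}}\,ds$ after a change of variables, so that $F(x) = \int_{|s|\le CR^{-1}} e(x_n s)\,\widehat{f_s d\sigma}(x)\,ds$ where $f_s(\xi') := \widehat F(\xi', |\xi'|^2+s)$. By Minkowski's inequality and \eqref{0526.41} applied to each $f_s$, $\|F\|_{L^p(B_R)} \lesssim R^\epsilon \int_{|s|\le CR^{-1}} \|f_s\|_{L^p([0,1]^{n-1})}\,ds \lesssim R^\epsilon \cdot R^{-1+\frac1p}\big(\int \|f_s\|_{L^p}^p\,ds\big)^{1/p}$ by Hölder on the $s$-integral over an interval of length $R^{-1}$, and finally $\int \|f_s\|_{L^p}^p\,ds = \|\widehat F\|_{L^p(\R^n)}^p$ after undoing the change of variables (the Jacobian is $1$). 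This produces \eqref{05260.42} with the correct constant and the correct power $R^{-(1-\frac1p)}$.

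The main obstacle — and the only genuinely non-routine point — is keeping careful track of the normalizations and the rapidly decaying Schwartz tails in the first direction: the function $\phi_R$ is not literally the indicator of $B_R$, so one must absorb the tail contributions $\int_{|x|\gtrsim R}$ into an acceptable error, and one must verify that the $L^\infty$-normalization of $d\sigma$ versus its $R^{-1}$-thickening indeed contributes precisely the factor $R$ that makes the two estimates match (rather than some other power). Once the bookkeeping of these constants is pinned down, both implications are soft. I would present the argument with the forward direction first since it is the one used to deduce Proposition \ref{0526.prop14}, and remark that the same scheme is how the relationship was originally observed (cf. Proposition 4.3 of \cite{MR1625056} and Lemma 6.29 of \cite{MR1097257}).
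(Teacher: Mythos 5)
The paper does not actually give a proof of Proposition \ref{0526.prop41}: it states that ``the proof is identical to that for Proposition 4.3 of \cite{MR1625056}'' and leaves out the details. So there is no in-paper argument to compare against; your proposal supplies the standard thickening/foliation argument, which is precisely the scheme of the cited reference and is the right approach.

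Your converse direction ($\eqref{0526.41}\Rightarrow\eqref{05260.42}$) is clean and correct: the vertical change of variables $\xi_n=|\xi'|^2+s$ has Jacobian $1$, Minkowski plus \eqref{0526.41} plus H\"older on the $s$-interval of length $\sim R^{-1}$ gives exactly the factor $R^{-(1-\frac1p)}$. The forward direction, however, has two slips you should repair. First, the scaling of $\phi_R$ is backwards: as written, $\phi_R(x)=R^n\phi(Rx)$ gives $\widehat{\phi_R}(\xi)=\widehat\phi(\xi/R)$ supported on $B_R$ in \emph{frequency}, so $\widehat F=\widehat{\widehat{fd\sigma}}*\widehat{\phi_R}$ would live in the $R$-neighborhood of $\bP^{n-1}$, not the $R^{-1}$-neighborhood, and moreover $\phi_R$ would be concentrated at spatial scale $R^{-1}$, so $|F|\gtrsim|\widehat{fd\sigma}|$ would fail on most of $B_R$. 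You want $\phi_R(x)=\phi(x/R)$, so that $\widehat{\phi_R}(\xi)=R^n\widehat\phi(R\xi)$ is $L^1$-normalized and supported on $B_{R^{-1}}$, and $\phi_R\sim 1$ on $B_R$. Second, the normalization accounting ``$\|\widehat F\|_{L^p}^p\sim R^{-1}\cdot R\cdot\|f\|_{L^p}^p$'' is off: the $L^\infty$ mismatch is a factor $R$ on $\widehat F$ itself, so it contributes $R^p$ (not $R$) after raising to the $p$th power; combined with the fiber measure $R^{-1}$ this gives $\|\widehat F\|_{L^p}^p\lesssim R^{p-1}\|f\|_{L^p}^p$, i.e.\ $\|\widehat F\|_{L^p}\lesssim R^{1-\frac1p}\|f\|_{L^p}$, which is the power you correctly announce earlier in the paragraph. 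With those two corrections, and noting that the thickened $\widehat F$ sits in $\N_{\bP^{n-1}}(CR^{-1})$ so one applies \eqref{05260.42} at scale $R/C$ (harmless since the constant is uniform in $R$), the argument goes through.
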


The proof of Proposition \eqref{0526.prop41} is identical to that for Proposition 4.3 of \cite{MR1625056}. We leave out the details.
\medskip

By the proposition, our goal will be to prove \eqref{05260.42}. Note that \eqref{05260.42} follows from
\begin{equation}\label{05260.43}
         \|F\|_{L^p(\R^n
         )} \leq CC_{p,\epsilon}R^{-(1-\frac1p)+\epsilon}\|\widehat{F}\|_{L^p(\R^n)}
    \end{equation}
by replacing $F$ with $F\psi_{B_R}$ where $\widehat{\psi_{B_R}}$ is supported on the ball of radius $R^{-1}$ centered at the origin. This argument is standard (for example, see the proof of Proposition 4.3 of \cite{MR1625056}), so we omit the details.

Recall that we 
 are assuming the following inequality (small cap decoupling).
\begin{equation}\label{2023.41}
    \|F\|_{L^p(\R^n)} \leq C_{p,\epsilon} R^{|\vec{\alpha}|(\frac12-\frac1p)+\epsilon} \Big( \sum_{\gamma \in \Gamma_{\vec{\alpha}}(R^{-1}) }\| \mathcal{P}_{\gamma}F \|_{L^p(\R^n)}^p \Big)^{\frac1p}.
\end{equation}
Since the left hand sides of \eqref{2023.41} and \eqref{05260.43} are the same, it suffices to prove
\begin{equation}
    \Big( \sum_{\gamma \in \Gamma_{\vec{\alpha}}(R^{-1}) }\| \mathcal{P}_{\gamma}F \|_{L^p(\R^n)}^p \Big)^{\frac1p} \lesssim R^{-|\vec{\alpha}|(\frac12-\frac1p)}R^{-(1-\frac1p)+\epsilon}\|\widehat{F}\|_{L^p}.
\end{equation}
Note that $\sum_{\gamma}\|\widehat{\mathcal{P}_{\gamma}F}\|_{L^p}^p \lesssim \|\widehat{F}\|_{L^p}^p $. So the above inequality follows from
\begin{equation}\label{0526.44}
    \|{\mathcal{P}_{\gamma}F}\|_{L^p} \lesssim R^{-|\vec{\alpha}|(\frac12-\frac1p)}R^{-(1-\frac1p)+\epsilon}\|\widehat{\mathcal{P}_{\gamma}F}\|_{L^p}.
\end{equation}
To prove this inequality, we first note two trivial inequalities.
\begin{equation}
    \begin{split}
        &\|{\mathcal{P}_{\gamma}F}\|_{L^2} = \|\widehat{\mathcal{P}_{\gamma}F}\|_{L^2},
        \\&
        \|\mathcal{P}_{\gamma}F\|_{L^{\infty}} \lesssim \|\widehat{\mathcal{P}_{\gamma}F}\|_{L^1} \lesssim |\gamma|^{\frac12}\|\widehat{\mathcal{P}_{\gamma}F}\|_{L^2}.
    \end{split}
\end{equation}
By H\"{o}lder's inequality,
\begin{equation}
\begin{split}
    \|{\mathcal{P}_{\gamma}F}\|_{L^p} &\lesssim |\gamma|^{\frac12-\frac1p}\|\widehat{\mathcal{P}_{\gamma}F}\|_{L^2}
    \\&
    \lesssim |\gamma|^{2(\frac12-\frac1p)}\|\widehat{\mathcal{P}_{\gamma}F}\|_{L^p}.
\end{split}
\end{equation}
Hence, \eqref{0526.44} follows from
\begin{equation}
    |\gamma|^{2(\frac12-\frac1p)} \lesssim R^{-|\vec{\alpha}|(\frac12-\frac1p)}R^{-(1-\frac1p)}.
\end{equation}
Recall that $|\gamma|=R^{-|\vec{\alpha}|-1}$ and $p=2+\frac{2}{|\vec{\alpha}|}$. By routine computations, one can check that the above inequality is true for $|\vec{\alpha}|>0$. This finishes the proof.
\\

We next assume the restriction conjecture and show that the small cap decoupling for $p>2+\frac{2}{n-1}$ with $|\vec{\alpha}|=n-1$. 

Fix $\epsilon>0$. Take $p$ sufficiently close to $p_c:=2+\frac{2}{n-1}$ but larger than the number. Let $\vec{\alpha}=(1,\ldots,1)$. Take a smooth bump function $\psi_{B_R}$ of a ball $B_R$ such that the Fourier support of the function is in $B_{R^{-1}}(0)$. By the restriction conjecture and Proposition \ref{0526.prop41}, we have
\begin{equation}\label{0629.411}
\begin{split}
    \|F\|_{L^{p_c}(B_R)} &\lesssim R^{\epsilon} \|\psi_{B_R}F\|_{L^{p}(B_R)} \lesssim R^{-(1-\frac1p)+2\epsilon}\|\widehat{\psi_{B_R}}* \widehat{F}\|_{L^p(\R^n)}.
\end{split}
\end{equation}
By the definition of $\mathcal{P}_{\gamma}$, we have
\begin{equation}
    \|\widehat{F}\|_{L^p(\R^n)}=\Big( \sum_{\gamma \in \Gamma_{\vec{\alpha}}(R^{-1}) }\| \widehat{\mathcal{P}_{\gamma}F }\|_{L^p(\R^n)}^p \Big)^{\frac1p}.
\end{equation}
By some technical issue, this equality makes some trouble in the later step. We will use a smooth partition of unity. For each $\gamma \in \Gamma_{\vec{\alpha}}(R^{-1})$, take a function $\Xi_{\gamma}$ so that 
\begin{gather}
     0 \leq \widehat{\Xi_{\gamma}}(\xi) \textit{ on  } \xi \in \R^n
    \\ \widehat{\Xi_{\gamma}}(\xi) \sim 1 \textit{ on  } \xi \in \gamma
    \\ 
    \widehat{\Xi_{\gamma}}=0 \textit{ on  } \xi \in (2\gamma)^c
    \\
    \sum_{\gamma \in \Gamma_{\vec{\alpha}}(R^{-1}) }\widehat{\Xi_{\gamma}}(\xi) =1 \textit{ on  } \xi \in \mathcal{N}_{\mathbb{P}^{n-1}}(R^{-1}).
\end{gather}
We may further assume that $\|\Xi_{\gamma}\|_{L^1(\R^n)} \sim 1$.
We now bound \eqref{0629.411} by
\begin{equation}
    \lesssim
        R^{-(1-\frac1p)+2\epsilon}
    \Big( \sum_{\gamma \in \Gamma_{\vec{\alpha}}(R^{-1}) }\| \widehat{\psi_{B_R}}*(\widehat{\Xi_{\gamma} } \widehat{F })\|_{L^{p}(\R^n)}^p \Big)^{\frac1p}.
\end{equation}
We next apply the Hausdorff-Young inequality, and this is bounded by
\begin{equation}
    \begin{split}
        &\lesssim
        R^{-(1-\frac1p)+2\epsilon}
    \Big( \sum_{\gamma \in \Gamma_{\vec{\alpha}}(R^{-1}) }\| {\Xi_{\gamma}*F }\|_{L^{p'}(\psi_{B_R})}^p \Big)^{\frac1p}
    \\& \lesssim
     R^{-(1-\frac1p)+2\epsilon}
     R^{n(\frac{1}{p'}-\frac1p)}
     \Big( \sum_{\gamma \in \Gamma_{\vec{\alpha}}(R^{-1}) }\| {\Xi_{\gamma}*F }\|_{L^{p}(\psi_{B_R})}^p \Big)^{\frac1p}
     \\&\lesssim
     R^{-(1-\frac1p)+2\epsilon}
     R^{n(\frac{1}{p'}-\frac1p)}
     \Big( \sum_{\gamma \in \Gamma_{\vec{\alpha}}(R^{-1}) }\| {\Xi_{\gamma}*F }\|_{L^{p_c}(\psi_{B_R})}^{p_c} \Big)^{\frac{1}{p_c}}.
    \end{split}
\end{equation}
Note that
\begin{equation}
    R^{-(1-\frac1p)}
     R^{n(\frac{1}{p'}-\frac1p)} \lesssim_{\epsilon} R^{\epsilon} R^{|\vec{\alpha}|(\frac12-\frac1p)} \lesssim_{\epsilon} R^{2\epsilon} R^{|\vec{\alpha}|(\frac12-\frac{1}{p_c})}.
\end{equation}
So it suffices to prove
\begin{equation}\label{0629.416}
    \Big( \sum_{\gamma \in \Gamma_{\vec{\alpha}}(R^{-1}) }\| {\Xi_{\gamma}*F }\|_{L^{p_c}(\psi_{B_R})}^{p_c} \Big)^{\frac{1}{p_c}} \lesssim \Big( \sum_{\gamma \in \Gamma_{\vec{\alpha}}(R^{-1}) }\| {\mathcal{P}_{\gamma}F }\|_{L^{p_c}(\psi_{B_R})}^{p_c} \Big)^{\frac{1}{p_c}}.
\end{equation}
Once we have this inequality, the desired inequality simply follows by summing over balls $B_R$. To prove \eqref{0629.416},
we first note that
\begin{equation}
    \Xi_{\gamma}*F=\sum_{\substack{ \gamma' \in \Gamma_{\vec{\alpha}}(R^{-1}):\\|\gamma-\gamma'| \lesssim R^{-1/2}  }} \Xi_{\gamma}* \mathcal{P}_{\gamma'}F.
\end{equation}
By this observation, and H\"{o}lder's inequality with $\|\Xi_{\gamma}\|_{1} \lesssim 1$, and the triangle inequality, the left hand side of \eqref{0629.416} is bounded by
\begin{equation}
\begin{split}
    &\lesssim  \Big( \sum_{\gamma \in \Gamma_{\vec{\alpha}}(R^{-1}) }\Big\| \sum_{\substack{ \gamma' \in \Gamma_{\vec{\alpha}}(R^{-1}):\\|\gamma-\gamma'| \lesssim R^{-1/2}  }} \mathcal{P}_{\gamma'}F \Big\|_{L^{p_c}(\psi_{B_R})}^{p_c} \Big)^{\frac{1}{p_c}} 
    \\& \lesssim \Big( \sum_{\gamma \in \Gamma_{\vec{\alpha}}(R^{-1}) }\| {\mathcal{P}_{\gamma}F }\|_{L^{p_c}(\psi_{B_R})}^{p_c} \Big)^{\frac{1}{p_c}}.
   \end{split} 
\end{equation}
This completes the proof of \eqref{0629.416}.

\bibliographystyle{alpha}
\bibliography{reference}

\end{document}